\documentclass[11pt]{article}
\usepackage{HongLiu}

\title{A sharp $p$-biased product bound for $r$-cross-intersecting families}
\author{Fan Chang\thanks{School of Statistics and Data Science, Nankai University, Tianjin, China; and Extremal Combinatorics and Probability Group, Institute for Basic Science, Daejeon, South Korea. Email: \texttt{1120230060@mail.nankai.edu.cn}. Supported by the National Natural Science Foundation of China (NSFC) under grant 124B2019 and by the Institute for Basic Science (IBS-R029-C4).}, \quad 
Hong Liu\thanks{Extremal Combinatorics and Probability Group (ECOPRO), Institute for Basic Science (IBS), Daejeon, South Korea. Email: \texttt{hongliu@ibs.re.kr}. Supported by Institute for Basic Science IBS-R029-C4.},\quad
Miao Liu\thanks{Research Center for Mathematics and Interdisciplinary Sciences, Shandong University, Qingdao, China, and Extremal Combinatorics and Probability Group (ECOPRO), Institute for Basic Science (IBS), Daejeon, South Korea. Email: \texttt{liumiao10300403@163.com}. Supported by China Scholarship Council and Institute for Basic Science, IBS-R029-C4.}
}
\date{}
\begin{document}
\maketitle

\begin{abstract}
We prove a sharp product theorem for $r$-cross-intersecting families in the $p$-biased measure.  If $r\ge2$, $0\le p\le \frac{r-1}{r}$, and $\F_1,\dots,\F_r\subseteq 2^{[n]}$ are $r$-cross-intersecting, then
\[
    \prod_{i=1}^r \mu_p(\F_i)\le p^r .
\]
The bound is attained by a common $1$-star, and the range of $p$ is best possible.  In particular, this proves the equal-bias case of a conjecture of Frankl and Tokushige and, for $r=3$, confirms a conjecture of Tokushige.

We also prove a stability theorem: for \(r\ge3\), every near-extremal \(r\)-tuple is close, in
\(p\)-biased measure, to a common \(1\)-star, with an optimal linear dependence on the product deficit. 
The extremal proof uses a coordinatewise coupling at the critical bias together with an isoperimetric inequality for increasing families.  
The stability proof uses dual families and random ordered partitions to obtain Fourier concentration, then applies biased Friedgut--Kalai--Naor theorem to force the star structure.
\end{abstract}

\section{Introduction}

Intersection theorems are among the central results of extremal set theory.  The classical
Erd\H{o}s--Ko--Rado theorem~\cite{EKR1961} asserts that, for \(n\ge 2k\), every intersecting
family \(\F\subseteq\binom{[n]}{k}\) has size at most
    $\binom{n-1}{k-1},$
with equality attained by a \(1\)-star.  For arbitrary families of subsets of \([n]\), the
corresponding weighted problem is naturally formulated using the \(p\)-biased measure.  For
\(0\le p\le1\), let \(\mu_p\) denote the product measure on \(2^{[n]}\) under which each
element of \([n]\) is included independently with probability \(p\); equivalently,
\[
    \mu_p(\F)
    =
    \sum_{F\in\F}p^{|F|}(1-p)^{n-|F|}
    \qquad
    \text{for }\F\subseteq2^{[n]}.
\]
This probabilistic viewpoint goes back at least to the work of Fishburn, Frankl, Freed,
Lagarias and Odlyzko~\cite{FFFLO1986}; see also the survey and monograph of Frankl and
Tokushige~\cite{FT2016,FT2018book}.

For a single family, the \(p\)-biased Erd\H{o}s--Ko--Rado theorem and its refinements show
that stars are extremal in the natural range of parameters; see, for instance, the biased
complete intersection theorem of Filmus~\cite{Filmus2017}.  It is therefore natural to ask
whether the same phenomenon persists for several families under cross-intersection
assumptions. We say that families
$    \F_1,\ldots,\F_r\subseteq2^{[n]}$
are \emph{\(r\)-cross-intersecting} if
$    A_1\cap\cdots\cap A_r\ne\varnothing$
for every choice \(A_i\in\F_i\), \(i\in[r]\).\footnote{If some \(\F_i\) is empty, then the
condition is vacuously satisfied and all product bounds below are immediate.}
The most basic example is obtained by taking all families to be the same \(1\)-star
$\mathcal S_j:=\{A\subseteq[n]:j\in A\}.$
Then
$\prod_{i=1}^r\mu_{p_i}(\mathcal S_j)
    =
    \prod_{i=1}^r p_i.$
Frankl and Tokushige conjectured that this example is extremal.

\begin{conjecture}[Frankl--Tokushige~\cite{FT2018book}]
Let
 $\max_{i\in[r]}p_i\le \frac{r-1}{r},$
and let \(\F_1,\ldots,\F_r\subseteq2^{[n]}\) be \(r\)-cross-intersecting.  Then
\[
    \prod_{i=1}^r \mu_{p_i}(\F_i)\le \prod_{i=1}^r p_i.
\]
\end{conjecture}

Our first result proves the equal-bias case of this conjecture for all \(r\). In particular, for $r=3$, it confirms a conjecture of Tokushige~\cite[Conjecture 4.11]{Tokushige2022}.

\begin{theorem}\label{thm:main}
Let \(r\ge2\) and \(0\le p\le \frac{r-1}{r}\), and let
\(\F_1,\ldots,\F_r\subseteq2^{[n]}\) be \(r\)-cross-intersecting.  Then
\[
    \prod_{i=1}^r\mu_p(\F_i)\le p^r.
\]
\end{theorem}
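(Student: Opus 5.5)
We reduce first to increasing families and then to the critical bias, following the hint in the abstract: a coordinatewise coupling at $p=\frac{r-1}{r}$ combined with an isoperimetric inequality for increasing families. \emph{Step 1 (monotonicity).} Replacing each $\F_i$ by its up-closure preserves the $r$-cross-intersecting property and can only increase $\mu_p(\F_i)$. So we may assume all $\F_i$ are increasing.

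\emph{Step 2 (reduction to the critical bias).} For an increasing family $\F$, the Margulis--Russo formula gives $\frac{d}{dp}\log\mu_p(\F)=\frac{I_p(\F)}{p(1-p)\mu_p(\F)}$, where $I_p(\F)$ is the total influence. The biased edge-isoperimetric inequality for increasing families says $I_p(\F)\ge \mu_p(\F)\log_p\mu_p(\F)$. This implies that $\log\mu_p(\F)/\log p$ is nonincreasing in $p$; equivalently, $\mu_q(\F)\ge \mu_p(\F)^{\log q/\log p}$ for $p\le q$. Suppose now that $\prod_i\mu_p(\F_i)>p^r$ for some $p<\frac{r-1}{r}$. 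Setting $q=\frac{r-1}{r}$, we would get
\[
\prod_i\mu_q(\F_i)\ge \Bigl(\prod_i\mu_p(\F_i)\Bigr)^{\log q/\log p}>p^{r\log q/\log p}=q^r .
\]
It therefore suffices to prove the theorem at $p=q=\frac{r-1}{r}$. One point needs care here: the inequality must be applied to the product, so we need $\prod_i\mu_q\ge\prod_i\mu_p^{\alpha}$ with a common exponent $\alpha=\log q/\log p$. That holds, since the exponent depends only on $p$ and $q$.

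\emph{Step 3 (the critical bias via coupling).} At $p=\frac{r-1}{r}$, we couple $r$ independent $\mu_p$-random sets $A_1,\dots,A_r$ coordinatewise. For each coordinate $j$, choose a uniformly random index $\sigma(j)\in[r]\cup\{\ast\}$ in such a way that the coordinate is missing from exactly one $A_i$ (namely $A_{\sigma(j)}$) with probability mass arranged so that each $A_i$ still has the $\mu_p$ marginal. Concretely, consider the joint law on $\{0,1\}^r$ per coordinate supported on the $r$ vectors with exactly one zero, each with probability $1/r$. Then each marginal is Bernoulli$\bigl(\frac{r-1}{r}\bigr)$, and $A_1\cap\dots\cap A_r=\varnothing$ almost surely. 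Under this coupling the vector $(1_{A_1\in\F_1},\dots,1_{A_r\in\F_r})$ is never all ones. This alone gives only $\sum_i\mu_p(\F_i)\le r-1$, i.e.\ $\prod_i\mu_p(\F_i)\le\bigl(\frac{r-1}{r}\bigr)^r$ by AM--GM, which is exactly $p^r$. So the critical case follows from the coupling plus AM--GM.

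The main obstacle is Step 2. We must verify that the isoperimetric inequality $I_p(\F)\ge\mu_p(\F)\log_p\mu_p(\F)$ holds for increasing families for all $p$ in the range, and not just for $p\le 1/2$. For $p\ge 1/2$ the standard form degrades, and I would first try proving it by induction on $n$ through the two-variable inequality for the restrictions $\F_{j}$ and $\F_{\bar j}$. If it fails above $1/2$, a fallback is to apply the coupling directly at general $p$: use a per-coordinate law with at least one zero and Bernoulli$(p)$ marginals, pass to a weighted AM--GM, and then optimize.
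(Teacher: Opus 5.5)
Your architecture is the paper's: pass to up-closures, use the monotonicity of $\log\mu_p(\mathcal F)/\log p$ for increasing families to move from $p$ to $p_*=\frac{r-1}{r}$, get $\sum_i\mu_{p_*}(\mathcal F_i)\le r-1$ from the coupling in which each coordinate is missed by exactly one of the $r$ sets, and finish with AM--GM. Your exponent bookkeeping in Step~2 is correct. The ``concretely'' version of your coupling is the paper's coupling; note that the $A_i$ are not independent, and an extra option $\ast$ would destroy $\bigcap_iA_i=\varnothing$. The weak point is the justification of the monotonicity, which is the only step with real analytic content. As written, it rests on a false inequality.

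With the Margulis--Russo formula in the form you wrote, $I_p=p(1-p)\sum_i\mathbb P_p[i\text{ pivotal}]$. The condition $\frac{d}{dp}\frac{\log\mu_p}{\log p}\le0$ is then equivalent to $p\sum_i\mathbb P_p[i\text{ pivotal}]\ge\mu_p\log_p\mu_p$, i.e.\ $I_p\ge(1-p)\,\mu_p\log_p\mu_p$. Your version, $I_p\ge\mu_p\log_p\mu_p$, already fails for a star ($p(1-p)$ versus $p$). The correct inequality does hold for increasing families at every $p\in(0,1)$; the restriction $p\le\frac12$ is needed only for non-monotone sets, so your worry about $p\ge\frac12$ is unfounded.

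The paper avoids derivatives entirely. It proves $\mu_p(\mathcal F)\le\mu_q(\mathcal F)^{\log p/\log q}$ for $0<p\le q<1$ by induction on $n$. With $\alpha=\log p/\log q\ge1$ and slices $\mathcal F^0\subseteq\mathcal F^1$, this reduces to $(1-p)x^\alpha+py^\alpha\le((1-q)x+qy)^\alpha$ for $0\le x\le y$. After writing $x/y=1/(1+u)$, that follows from $u\mapsto((1+u)^\alpha-1)/u^\alpha$ being nonincreasing. This is your proposed induction, run on the integrated comparison inequality rather than on the isoperimetric inequality.

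Your fallback would not work. A coupling forcing $\bigcap_iA_i=\varnothing$ only yields $\sum_i\mu_p(\mathcal F_i)\le r-1$. Below $p_*$ the sum really can approach $r-1>rp$: take $\mathcal F_1=\{[n]\}$ and $\mathcal F_2=\dots=\mathcal F_r=\{A:|A|>\frac{r-2}{r-1}n\}$ with $\frac{r-2}{r-1}<p<p_*$. So the passage through $p_*$ is essential.

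Finally, treat $p=0$ separately, since your reduction divides by $\log p$. There the families cannot all contain $\varnothing$, so the product is $0$.
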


The range in Theorem~\ref{thm:main} is best possible.  Indeed, suppose $p>(r-1)/r.$
For large \(n\), take
   all families to be $\left\{A\subseteq[n]: |A|>\frac{r-1}{r}n\right\}.$
By pigeonhole principle, these families are \(r\)-cross-intersecting. On the other hand, as
\(p>(r-1)/r\), by the law of large numbers,     $\mu_p(\F_i)\to1$
as $n\to\infty.$
Thus, for sufficiently large \(n\),
    $\prod_i\mu_p(\F_i)>p^r.$

The proof of Theorem~\ref{thm:main} rests on two ingredients.  The first is a
critical-bias additive bound: at the bias
$    p_*=\frac{r-1}{r},$
the sum of the measures of \(r\) cross-intersecting families is at most \(r-1\).
Related additive estimates were known in several settings.  For two families at
\(p=1/2\), the statement follows by complementation and is closely related to
Kleitman's non-disjointness theorem~\cite{Kleitman1966}.  For three families,
Tokushige~\cite[Theorem~7]{Tokushige2024Nontrivial3wise} proved a sharp result in
the range \(1/3\le p\le1/2\).  More generally, Gupta, Mogge, Piga and Sch\"ulke~\cite{GuptaMoggePigaSchuelke2023}
obtained sharp sum theorems for non-empty \(r\)-cross \(t\)-intersecting families,
which specialize to product measure only for \(p\le1/2\).
In the uniform setting, Frankl and Tokushige proved an analogous critical estimate for
\(r\)-cross-union \(n-k\)-uniform families~\cite{FT2011}.

These earlier mechanisms do not imply the critical-bias additive bound needed here.  Many
of them rely on shifting, compression, or complement-based arguments~\cite{Borg2016,Frankl1987,FT1992,FranklTokushige1998}, which naturally compare \(p\) with \(1-p\) and therefore
encounter a half-bias barrier.  Instead, we use a direct coordinatewise coupling: for each
coordinate \(a\in[n]\), independently choose \(R_a\in[r]\) uniformly at random, and include
\(a\) in all random sets \(X_i\) except \(X_{R_a}\).  Then each \(X_i\) has distribution
\(\mu_{p_*}\), while
    $X_1\cap\cdots\cap X_r=\varnothing$
deterministically.  Hence, for \(r\)-cross-intersecting families, the events \(X_i\in\F_i\)
cannot all occur, giving
\[
    \sum_{i=1}^r\mu_{p_*}(\F_i)\le r-1.
\]
This coupling is the point at which the proof reaches the sharp bias \(p_*=(r-1)/r\) directly
in the biased cube, rather than by the usual ``going to infinity and back'' route of
Dinur--Safra~\cite{DS2005} and Frankl--Tokushige~\cite{FT2003}.

The second ingredient is a comparison lemma for increasing families: if $0<p\le q<1$, then
    $\mu_p(\F)\le \mu_q(\F)^{\log p/\log q}$.
Equivalently, \(\log\mu_p(\F)/\log p\) is monotone in \(p\).  This monotonicity is
well known; see, for instance,~\cite[Theorem~2.38]{Grimmett} and
\cite[Lemma~2.6]{EllisKellerLifshitzJEMS2019}.  Analytically, it follows by differentiating
\(\log\mu_p(\F)/\log p\) and combining the Margulis--Russo formula~\cite{Margulis1974,Russo1981}
with the \(p\)-biased edge-isoperimetric inequality~\cite{EllisKellerLifshitz2019}.  In this
sense the comparison lemma is isoperimetric in nature: weak Harper's edge-isoperimetric
inequality can be recovered from it by differentiation, whereas differentiating the classical
small-set expansion bound for the \(\rho\)-noise operator does not give the sharp Harper
constant; see~\cite[Chapter~10]{ODonnell2014}.  In the proof below we give a short
self-contained induction proof.  Applying the comparison lemma with \(q=p_*\), and then
using AM--GM together with the additive bound above, gives the desired product bound.

We also prove a stability version of Theorem~\ref{thm:main}.  Closeness is measured in
\(p\)-biased symmetric difference; as usual,
    $\F\Delta\G=(\F\setminus\G)\cup(\G\setminus\F).$

\begin{theorem}\label{thm:r-cross-product-stability}
Let $r\ge3$.  There exist constants $\varepsilon_{\ref{thm:r-cross-product-stability}}(r)>0$ and $C_{\ref{thm:r-cross-product-stability}}(r)>0$ such that the following holds for all $0<p\le\frac{r-1}{r}$ and $0<\varepsilon\le\varepsilon_{\ref{thm:r-cross-product-stability}}(r)$.
Let $\F_1,\ldots,\F_r\subseteq2^{[n]}$ be $r$-cross-intersecting. If
$\prod_{i=1}^r \mu_p(\F_i) \ge (1-\varepsilon)p^r,$
then there exists a $1$-star $\mathcal S_j$ such that for all $i\in [r]$,
\[
\mu_p(\F_i \Delta \mathcal S_j)\le C_{\ref{thm:r-cross-product-stability}}(r)p\varepsilon.
\]
\end{theorem}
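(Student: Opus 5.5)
We will follow the route sketched in the abstract: pass from near-equality in the product bound to Fourier concentration on low levels, and then use a biased Friedgut--Kalai--Naor theorem to force a common star.

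\textbf{Reductions.} First we may assume each $\F_i$ is increasing, since taking up-closures preserves $r$-cross-intersection and only increases measures. The symmetric difference to $\mathcal S_j$ changes by at most the added mass, which is $O(p\varepsilon)$ once we know the up-closure is close to a star; this needs a short check at the end. Next, the proof of Theorem~\ref{thm:main} gives
\[
\prod_i\mu_p(\F_i)\le \prod_i\mu_{p_*}(\F_i)^{\log p/\log p_*}\le \Big(\tfrac1r\textstyle\sum_i\mu_{p_*}(\F_i)\Big)^{r\log p/\log p_*}\le p^r .
\]
Near-equality therefore forces three things up to $O(\varepsilon)$ losses: the AM--GM step is nearly tight, so all $\mu_p(\F_i)$ are $\approx p$; the additive bound $\sum_i\mu_{p_*}(\F_i)\approx r-1$ is nearly tight; and the comparison lemma is nearly tight. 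Near-tightness of the comparison lemma, through the derivative formulation (Margulis--Russo plus edge-isoperimetry), says that the total influence $I_p(\F_i)$ is close to the Harper minimum $\mu\log_p\mu/(p\cdots)$. That is the isoperimetric equality regime, with total influence about $1$ (suitably normalized) at measure $\approx p$.

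\textbf{Fourier concentration.} Rather than a general edge-isoperimetric stability theorem, we will use dual families and random ordered partitions. Put $\G_i=\{[n]\setminus A: A\notin\F_i\}$. Cross-intersection of the $\F_i$ says that no $r$ sets $B_i\notin \F_i$ have union avoiding... more usefully, it yields a linear identity. Take a random ordered partition $(P_1,\dots,P_r)$ of $[n]$, with each coordinate placed in a uniform part. Then $\sum_i \mathbf 1[[n]\setminus P_i\in\F_i]\le r-1$ always, and the expected slack equals $r-1-\sum_i\mu_{p_*}(\F_i)=O(\varepsilon)$. Expanding $\mathbb E\prod$-type expressions over the partition in the $p$-biased Fourier basis, where correlations between the $r$ coordinate-marginals act as a noise operator with correlation $-1/(r-1)$, will show that $\sum_{|S|\ge2}\hat f_i(S)^2=O(\varepsilon)\cdot p$ for $f_i=\mathbf 1_{\F_i}$, with weights taken at the bias $p$. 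To get there, transfer from $p_*$ to $p$ using monotonicity in $p$ of the noise-stability quantity, or by rerunning the coupling with a $p$-dependent partition: each coordinate lies in all sets with probability $1-r(1-p)\ge0$, and otherwise is missing from exactly one uniform set. This second coupling is valid exactly when $p\ge p_*$, so for $p<p_*$ we would instead reduce to $p_*$ via the comparison lemma's near-tightness. This transfer is the main obstacle. The linear dependence on $\varepsilon$ must survive it, and that requires the loss in the comparison lemma to be controlled by the influence deficit, quantitatively and uniformly in small $p$.

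\textbf{Star structure.} Given the level-$\ge2$ weight $O(p\varepsilon)$, the biased FKN theorem, in the form with error proportional to $\mu$ so that it is valid for small $p$, gives for each $i$ either a dictator $\mathcal S_{j_i}$ or a constant with $\mu_p(\F_i\Delta\mathcal S_{j_i})=O(p\varepsilon)$. Constants are excluded by $\mu_p(\F_i)\approx p$, and antidictators by monotonicity. If $j_i\ne j_k$ for some $i\ne k$, then star families on distinct centers are not $r$-cross-intersecting with positive slack. Since $r\ge3$, we can choose sets avoiding the respective centers and witness a violation of measure $\Omega(p^{r})$-proportionally, contradicting closeness for $\varepsilon\le\varepsilon(r)$. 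Hence all centers coincide, giving the required $\mathcal S_j$ with constant $C(r)$.
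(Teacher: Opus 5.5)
\textbf{There is a genuine gap.} Your random-partition/Gram-matrix mechanism is the right source of Fourier concentration, but it only works at the critical bias. The gap is exactly the step you flag as the ``main obstacle'': getting from $p_*=\frac{r-1}{r}$ down to an arbitrary, possibly tiny, $p$ with error $O(p\varepsilon)$. Neither route you sketch is supported, and the tool you intend to use at bias $p$ is false.

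\begin{itemize}
\item \emph{The small-$p$ FKN statement fails.} You want an FKN theorem with error $O(p\varepsilon)$ (or $O(\varepsilon\mu)$), uniform in small $p$. Take $f=\mathbf 1[x_1\vee x_2]$. It is increasing and its $p$-biased level-$2$ weight is $p^2(1-p)^2$. Yet it is at distance at least $p(1-p)$ from every dictator and every constant. With $\varepsilon=p$, this contradicts the claimed bound $Cp^2$ once $p<1/(C+1)$. The regime $p\ll\varepsilon$ is one the theorem must cover. The same example shows that the Kindler--Safra constant $C(p)$ must grow like $1/p$.

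\item \emph{The comparison-lemma route is unsupported.} Reducing via ``near-tightness of the comparison lemma'' would need a quantitative, $p$-uniform stability theorem for biased edge-isoperimetry. You do not have one, and it would be harder than the statement being proved.

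\item \emph{$\mu_p(\mathcal F_i)\approx p$ is not uniform in $p$.} AM--GM stability only gives $|\mu_{p_*}(\mathcal F_i)-p_*|=O(\sqrt\varepsilon)$. Pushing $\mu_{p_*}(\mathcal F_i)\le p_*(1+c\sqrt\varepsilon)$ through $\mu_p\le\mu_{p_*}^{\alpha}$, with $\alpha=\log p/\log p_*$ unbounded, gives nothing.

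\item \emph{The alignment step has the same scaling problem as written.} A violation of mass $\Omega(p^r)$ does not contradict closeness $O(p\varepsilon)$ when $\varepsilon\gg p^{r-1}$.
\end{itemize}

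\textbf{The missing idea is that no Fourier analysis at bias $p$ is needed.} Run your partition/Gram argument only at the critical bias; the paper does this for the dual families at bias $1/r$.
\begin{itemize}
\item At that bias, the standard biased FKN applies with the fixed constant $C(1/r)$.
\item Center alignment there uses a set of colorings of probability at least $r^{-r}$, a constant independent of $p$.
\item AM--GM stability makes all $\mu_{p_*}(\mathcal G_i)$ close to $p_*$, which rules out the ``one family nearly empty'' alternative.
\end{itemize}
This gives a single $j$ with $\mu_{p_*}(\mathcal G_i\Delta\mathcal S_j)=O(\varepsilon)$ for the up-closures $\mathcal G_i$.

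Then transfer back combinatorially:
\begin{itemize}
\item The off-star slice $\mathcal G_i^0=\{A\subseteq[n]\setminus\{j\}:A\in\mathcal G_i\}$ is increasing with $\mu_{p_*}(\mathcal G_i^0)=O(\varepsilon)\le p_*$.
\item The comparison lemma applied to this slice gives $\mu_p(\mathcal G_i^0)\le p\,(\mu_{p_*}(\mathcal G_i^0)/p_*)^{\alpha}=O(p\varepsilon)$. Here a large $\alpha$ only helps, since the ratio is at most $1$.
\item Hence $\mu_p(\mathcal G_i)\le p(1+O(\varepsilon))$ for every $i$.
\item The product hypothesis then forces $\mu_p(\mathcal F_i)\ge p(1-O(\varepsilon))$.
\item This bounds both the missing part $p(1-\mu_p(\mathcal G_i^1))$ of the star and $\mu_p(\mathcal G_i\setminus\mathcal F_i)$ by $O(p\varepsilon)$.
\end{itemize}
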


The assumption \(r\ge3\) is necessary at the critical point.  When \(r=2\) and \(p=1/2\), define the majority 
\[
\mathcal M_n:=
\begin{cases}
\{A\subseteq[n]: |A|>n/2\}, & \text{if } n \text{ is odd},\\[2mm]
\{A\subseteq[n]: |A|>n/2\}\cup
\{A\subseteq[n]: |A|=n/2\text{ and }1\in A\}, & \text{if } n \text{ is even}.
\end{cases}
\]
Then $\mathcal M_n$ is intersecting and $\mu_{1/2}(\mathcal M_n)=1/2$, so $\F_1=\F_2=\mathcal M_n$ is a non-star extremal pair.

The linear dependence on \(\varepsilon\) in Theorem~\ref{thm:r-cross-product-stability} is
best possible.  Let
$    \mathcal T_n:=\mathcal S_1\setminus\{\{1\}\}$
and take
$    \F_1=\mathcal T_n, \F_2=\cdots=\F_r=\mathcal S_1.$
If $    \varepsilon_n:=(1-p)^{n-1},$ then
    $\mu_p(\mathcal T_n)=p(1-\varepsilon_n)$
 and 
    $\mu_p(\mathcal T_n\Delta\mathcal S_1)=p\varepsilon_n.$
The product is \((1-\varepsilon_n)p^r\), while the distance from the extremal star is of order
\(p\varepsilon_n\).

The key ingredient in the proof of Theorem~\ref{thm:r-cross-product-stability} is the following additive stability result.

\begin{theorem}\label{lem:q-stability-r}
Let $r\ge 3$ be fixed, and let $p_*=\frac{r-1}{r}$. Then there exist constants $\eta_{\ref{lem:q-stability-r}}(r)>0$ and $C_{\ref{lem:q-stability-r}}(r)>0$ such that the following holds for all $0<\eta \le \eta_{\ref{lem:q-stability-r}}(r)$. Let $\A_1,\dots,\A_r\subseteq 2^{[n]}$ be $r$-cross-intersecting. If
$\sum_{i=1}^r \mu_{p_*}(\A_i)\ge r-1-\eta,$
then one of the following alternatives holds.
\begin{itemize}
    \item[(i)] There exists a $1$-star $\mathcal{S}_j\subseteq 2^{[n]}$ such that $\mu_{p_*}(\A_i\Delta \mathcal{S}_j)\le C_{\ref{lem:q-stability-r}}(r)\eta$ for every $i\in[r]$.

    \item[(ii)] There exists a unique index $i\in [r]$ such that $\mu_{p_*}(\A_i)\le C_{\ref{lem:q-stability-r}}(r)\eta$ and, for every $j\in[r]\setminus\{i\}$, $\mu_{p_*}(\A_j)\ge 1-C_{\ref{lem:q-stability-r}}(r)\eta$. 
\end{itemize}
\end{theorem}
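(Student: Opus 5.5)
We plan to run the coupling from the introduction backwards and measure how much slack it leaves. We may assume each $\A_i$ is increasing, since replacing $\A_i$ by its up-closure preserves the $r$-cross-intersecting property and only increases measures. Let $(X_1,\dots,X_r)$ be the coupling: for each coordinate $a$ choose $R_a\in[r]$ uniformly and put $a\in X_i$ iff $R_a\ne i$. Each $X_i$ is $\mu_{p_*}$-distributed and the events $\{X_i\in\A_i\}$ never all occur. Hence
\[
\sum_i\mu_{p_*}(\A_i)=\mathbb E\,\#\{i:X_i\in\A_i\}\le r-1,
\]
with deficit $\eta\ge\Pr[\#\{i:X_i\in\A_i\}\le r-2]$. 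So near-equality says that, outside a set of probability at most $\eta$, \emph{exactly one} index $i$ fails. Write $\B_i:=\{X:X_i\notin\A_i\}$ for these failure events in the random ordered partition $(R_a)_a$. Then the events $\B_1,\dots,\B_r$ almost partition the probability space, and $\Pr[\B_i]=1-\mu_{p_*}(\A_i)=:\beta_i$ with $\sum_i\beta_i\le 1+\eta$.

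We work with the dual families $\A_i^*:=\{[n]\setminus A: A\notin\A_i\}$. These are increasing with $\mu_{1/r}(\A_i^*)=\beta_i$, since $[n]\setminus X_i=R^{-1}(i)$ is $\mu_{1/r}$-distributed. Near-partition now reads: the sets $R^{-1}(1),\dots,R^{-1}(r)$, which partition $[n]$, satisfy ``$R^{-1}(i)\in\A_i^*$ for exactly one $i$'' with probability at least $1-\eta$.

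The first structural step is Fourier concentration. Condition on the partition restricted to all but one coordinate $a$, and resample $R_a$. Within most such fibers, exactly one of the $\A_i^*$ must succeed for every value of $R_a$. Using $r\ge3$, we will show that for each $i$ the influence of coordinates on $\A_i^*$, in the $\mu_{1/r}$ sense, is essentially ``dictatorial'' in aggregate. Concretely, when coordinate $a$ is moved between parts $i$ and $k$, the owner of the success can only switch between $i$ and $k$; a third part $\ell$ is unaffected, and this rigidity is exactly where $r\ge3$ enters. Summing over coordinates, this yields a bound of the form
\[
\sum_a \mathrm{Inf}_a(\A_i^*)\le O_r\bigl(\beta_i(1-\beta_i)\bigr)+O_r(\eta)
\]
on total influence, or equivalently that the $\mu_{1/r}$-Fourier weight of $1_{\A_i^*}$ above level $1$ is $O_r(\eta)$. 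This step is the main obstacle: we would obtain it by expanding the identity $\sum_i 1_{\B_i}=1-O(\eta)$ in the $r$-ary orthogonal basis of the partition space. Then we would compare the level-$\ge2$ weights, using that $1_{\B_i}$ depends on $R$ only through the indicator $1\{R_a=i\}$.

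Given the level-$1$ concentration, we apply the biased Friedgut--Kalai--Naor theorem to each $1_{\A_i^*}$. Each is then $O_r(\eta)$-close to a dictator, an antidictator, or a constant. Because $\A_i^*$ is increasing, only a constant or the star $\{B: j_i\in B\}$ can arise; the star has measure $1/r$. We then match these against $\sum_i\beta_i\approx 1$ and the exact-one condition. If some $\A_i^*$ is close to the constant $1$, all the others must be close to $\emptyset$; this gives alternative (ii), with uniqueness of $i$ because $\beta_i\approx1$ can hold for only one index. Otherwise every $\A_i^*$ is close to a star $\{j_i\in B\}$, or to $\emptyset$. The exact-one requirement on a random partition forces all the $j_i$ to equal a common $j$, since for distinct $j_i$ one could place every $j_i$ in the wrong part with constant probability. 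Together with $\beta_i$ summing to about $1$, this means all $r$ families are stars. Translating back, $\A_i$ is $O_r(\eta)$-close to $\mathcal S_j$, which is alternative (i). Linearity in $\eta$ comes from the linear FKN bound combined with the linear Fourier-tail bound.
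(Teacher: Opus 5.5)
Your overall route matches the paper's: up-closures, the dual increasing families $\mathcal A_i^*$ at bias $1/r$, and the random ordered partition $R^{-1}(1),\dots,R^{-1}(r)$, on which some $R^{-1}(i)\in\mathcal A_i^*$ holds pointwise. Then come level-$\ge2$ concentration, biased FKN, and the case analysis with the $r^{-r}$ colouring argument for a common centre; that endgame is fine. The gap is in the step you flag as the main obstacle, which is where the whole proof lives, and the justification you sketch would not deliver it.

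A bound $\sum_a\mathrm{Inf}_a(\mathcal A_i^*)\le O_r(\beta_i(1-\beta_i))+O_r(\eta)$ is not equivalent to small weight above level $1$. Take $\mathrm{Inf}_a$ to be the pivotal probability; at bias $p=1/r$ one has $p(1-p)\sum_a\mathrm{Inf}_a=\sum_S|S|\hat f(S)^2$. So you would need the sharp inequality $\sum_S|S|\hat f(S)^2\le\sum_{S\ne\varnothing}\hat f(S)^2+O(\eta)$. With an unspecified $O_r(1)$ factor on the variance, the $2$-junta $f=x_1x_2$ satisfies your bound with $\eta=0$, yet $\hat f(\{1,2\})^2=p^2(1-p)^2$. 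The ``a third part is unaffected'' rigidity gives no such sharp constant, and it is not where $r\ge3$ enters.

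What is actually needed is the following computation. Set $u_i(y)=(\mathbf 1\{y=i\}-\frac1r)/\sqrt{(r-1)/r^2}$ and $u_i^S(R)=\prod_{a\in S}u_i(R_a)$. Then $g_i(R):=\mathbf 1_{\mathcal A_i^*}(R^{-1}(i))=\sum_S\widehat{\mathbf 1_{\mathcal A_i^*}}(S)\,u_i^S(R)$. Terms with different $S$ are orthogonal. Since $\mathbb E[u_iu_k]=-\frac1{r-1}$ for $i\ne k$, the Gram matrix of $u_1^S,\dots,u_r^S$ for $|S|=s$ is $(1-\rho_s)I+\rho_sJ$ with $\rho_s=(-\frac1{r-1})^s$. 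For $s\ge2$ its eigenvalues $1-\rho_s$ and $1+(r-1)\rho_s$ are at least $1-(r-1)^{-2}=\frac{r(r-2)}{(r-1)^2}$. This is positive exactly when $r\ge3$, and that is where the hypothesis is used. At $s=1$ the eigenvalue $1+(r-1)\rho_1$ vanishes, which is what leaves room for the star. For $r=2$ an eigenvalue vanishes at every level, matching the majority example.

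You also need an $L^2$ statement, not just ``exactly one index fails with probability $\ge1-\eta$''. Note that $\sum_ig_i\ge1$ holds pointwise, not $1-O(\eta)$. Since $0\le\sum_ig_i-1\le r-1$ with mean at most $\eta$, we get $\mathbb E[(\sum_ig_i-1)^2]\le(r-1)\eta$. The eigenvalue bound then gives $\sum_{|S|\ge2}\widehat{\mathbf 1_{\mathcal A_i^*}}(S)^2\le\frac{(r-1)^3}{r(r-2)}\eta$ for each $i$, which is exactly the FKN input.

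Finally, undoing your ``assume increasing'' reduction needs one more line. By Lemma~\ref{lem:critical-sum}, $\sum_i\mu_{p_*}(\mathcal A_i^\uparrow\setminus\mathcal A_i)\le\eta$, which costs only an additive $\eta$ in either alternative.
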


We briefly explain the proof of Theorem~\ref{lem:q-stability-r}.  A direct inductive approach is possible at a qualitative level. One can slice the families with respect to a coordinate and apply induction. This gives a version of the same dichotomy, but forces a dimension-dependent restriction on the approximation parameter $\eta\le r^{-n}$. Such a loss would not yield a dimension-free quantitative stability theorem. Instead, we take a Fourier-analytic approach. First pass to the up-closures
    $\A_i^\uparrow$
and then to the dual families
\[
    \B_i:=\{B\subseteq[n]:[n]\setminus B\notin\A_i^\uparrow\}.
\]
The dualization changes the critical bias \(p_*=(r-1)/r\) into \(1/r\):
$    \mu_{1/r}(\B_i)=1-\mu_{p_*}(\A_i^\uparrow).$
Moreover, the \(r\)-cross-intersection condition becomes a covering condition for ordered
partitions: for every ordered partition
$    P_1,\ldots,P_r$
of \([n]\), at least one event \(P_i\in\B_i\) holds.

Now choose a random coloring \(x\in[r]^n\) and let
    $C_i(x):=\{a\in[n]:x_a=i\}.$
Then \(C_1(x),\ldots,C_r(x)\) form a random ordered partition of $[n]$, and each \(C_i(x)\) has
distribution \(\mu_{1/r}\).  Near equality in the critical-sum lemma implies that
$    \sum_{i=1}^r \mathbbm{1}_{\B_i}(C_i(x))$
is close in \(L^2\) to the constant function \(1\).  Expanding these functions in the
\(1/r\)-biased Fourier basis, a standard Gram-matrix calculation shows that the Fourier mass of
each \(\mathbbm{1}_{\B_i}\) above level \(1\) is \(O_r(\eta)\).  The biased
Friedgut--Kalai--Naor theorem~\cite{kindler2002property,kindler2002noise} then implies that
each \(\B_i\) is close to a constant family or to a \(1\)-star.  Finally, the ordered-partition
covering condition aligns these local alternatives, giving exactly the two possibilities in
Theorem~\ref{lem:q-stability-r}.

This use of FKN differs from previous Fourier-analytic stability arguments in
intersection theory.  For two cross-intersecting families, one can often encode the condition
using a graph operator and apply a Hoffman-type bound.  For intersection conditions involving
three or more simultaneous choices, high-dimensional Hoffman bounds have been developed and
used effectively, for example by Filmus, Golubev and Lifshitz~\cite{FGL2021} and by
Tokushige~\cite{Tokushige2022}.  Here we avoid this route: the random ordered partition and
the dual covering condition give the required Fourier concentration directly.

Starting from a near-extremal product in Theorem~\ref{thm:r-cross-product-stability}, we pass
to up-closures and use the comparison lemma to transfer the product lower bound to the
critical bias.  The stability form of AM--GM then forces all numbers
    $\mu_{p_*}(\F_i^\uparrow)$
to be close to \(p_*\).  This rules out alternative (ii) in Theorem~\ref{lem:q-stability-r}, and alternative (i) gives a common star at the critical bias.  A final application of the
comparison lemma on the two slices of the approximating star transfers the conclusion
back from \(p_*\) to the original bias \(p\).

\medskip\noindent\emph{Further consequences.}
In the concluding section we record several consequences and directions.  First, the biased
product theorem gives a short proof of the asymptotic \(k\)-uniform product bound, and the
biased stability theorem gives a rough \(k\)-uniform stability statement via the standard
up-closure and local LYM argument.  Second, the equal-bias theorem implies a partial
unequal-bias result for logarithmically balanced choices of \(p_1,\ldots,p_r\).  We also discuss
an index-hypergraph formulation of the product argument.

\medskip\noindent\emph{Organization.}
Section~\ref{sec:pre} contains notation and preliminary tools.  Section~\ref{sec:extremal}
proves the critical-bias additive bound and the comparison lemma, and then uses them to
prove Theorem~\ref{thm:main}.  Section~\ref{sec:stability} proves
Theorem~\ref{lem:q-stability-r} and Theorem~\ref{thm:r-cross-product-stability}.
Section~\ref{sec:concluding} records consequences, extensions, and open questions.

\section{Preliminaries}\label{sec:pre}

\subsection{Definitions and notation}
Let $[n]:=\{1,\dots,n\}$ and let $2^{[n]}$ denote the power set of $[n]$. A family $\F\subseteq 2^{[n]}$ is said to be \emph{increasing} if given $A\in \F$ and $A\subseteq B$ we also have $B\in \F$. We define the \emph{up-closure} of $\F$ to be
$$
\F^\uparrow=\{B\subseteq[n]: A\subseteq B \text{ for some } A\in\F\},
$$
i.e., it is the minimal increasing subfamily of $2^{[n]}$ that contains $\F$.

We will often identify $\{0,1\}^n$ with $2^{[n]}$ via the correspondence $(x_1,\dots,x_n)\leftrightarrow\{i\in[n]:x_i=1\}$. We write $\mathbbm{1}_{\F}$ for the \emph{indicator function} of $\F$, i.e., the Boolean function
$$
\mathbbm{1}_{\F}:\{0,1\}^n\to\{0,1\},\quad \mathbbm{1}_{\F}(x)=\begin{cases}
1, &\text{if $x\in \F$},\\
0, &\text{if $x\notin \F$}.
\end{cases}
$$

Let $0\le p\le1$. By identifying $\{0,1\}^n$ with $2^{[n]}$ as above, the
$p$-biased measure on $2^{[n]}$ can alternatively be viewed as the product
measure on $\{0,1\}^n$ under which the coordinates are independent with
    $\mathbb P(x_i=1)=p$ and  $\mathbb P(x_i=0)=1-p.$
Equivalently, with the convention that $0^0=1$,
\[
\mu_p(A)=\sum_{x\in A}p^{|\{i\in[n]:x_i=1\}|}(1-p)^{|\{i\in[n]:x_i=0\}|},
\qquad
\forall A\subseteq\{0,1\}^n.
\]

\subsection{Biased Fourier analysis}

Here we summarize some notation and basic properties of Fourier analysis on $\{0,1\}^n$. We fix
$p\in(0,1)$ and consider $\{0,1\}^n$ to be equipped with the $p$-biased measure $\mu_p$. For each $i\in[n]$ we define $\chi_i:\{0,1\}^n\to\mathbb{R}$ by $\chi_i(x)=\frac{x_i-p}{\sqrt{p(1-p)}}$ (so $\chi_i$ has mean $0$ and variance $1$). 
We use the orthonormal Fourier basis $\{\chi_S\}_{S\subseteq[n]}$ of $L^2(\{0,1\}^n,\mu_p)$, where each $\chi_S:=\prod_{i\in S}\chi_i$. Any $f:\{0,1\}^n \to \mathbb{R}$ has a unique expression $f=\sum_{S\subseteq[n]}\hat{f}(S)\chi_S$ where $\{\hat{f}(S)\}_{S\subseteq[n]}$ are the $p$-biased Fourier coefficients of $f$. Orthonormality gives the Plancherel identity $\tup{f,g}=\sum_{S\subseteq[n]}\hat{f}(S)\hat{g}(S)$. In particular, we have the Parseval identity $\mathbb{E}[f^2]=\|f\|_2^2=\tup{f,f}=\sum_{S\subseteq[n]}\hat{f}(S)^2$.

We shall use the following monotone consequence of the biased FKN theorem.  The original
FKN theorem of Friedgut, Kalai and Naor~\cite{FKN2002} was proved for the uniform measure, and Kindler and Safra
\cite{kindler2002property,kindler2002noise} extended it to the $p$-biased measure for arbitrary
fixed $p\in(0,1)$.  In its usual form, the conclusion allows Boolean functions depending on one
coordinate; in set notation, these are stars 
$    \mathcal S_i=\{A\subseteq[n]:i\in A\}$
and anti-stars
$    \mathcal S_i^0=\{A\subseteq[n]:i\notin A\}.$
For increasing families, the anti-star alternative $\mathcal S_i^0$ can be absorbed into the
constant-family alternatives.  Indeed, if $\F$ is increasing and $\mu_p(\F\Delta\mathcal S_i^0)=\delta,$
then writing
\[
    \F^0=\{A\subseteq[n]\setminus\{i\}:A\in\F\},
    \qquad
    \F^1=\{A\subseteq[n]\setminus\{i\}:A\cup\{i\}\in\F\},
\]
we have $\F^0\subseteq\F^1$, and therefore
$    \delta=(1-p)(1-\mu_p(\F^0))+p\mu_p(\F^1)
    \ge \min\{p,1-p\}.$
Thus, if the standard FKN theorem returns an anti-star, then either the error is already bounded
below by a positive constant depending only on $p$, in which case one of the constant families
$\varnothing,2^{[n]}$ is close enough after increasing the constant, or else the anti-star case is
impossible.  Hence the standard biased FKN theorem yields the following form.

\begin{theorem}[Biased FKN]\label{lem:biased-FKN}
Let $0<p<1$. There exists a constant $C_{\ref{lem:biased-FKN}}(p)>0$ such that the following holds for every $n\ge 1$. If $\F\subseteq 2^{[n]}$ is increasing and satisfies
$$\sum_{|S|\ge 2}\widehat{1_{\F}}(S)^2\le \varepsilon,$$
then there exists $\mathcal H\in\{\varnothing,2^{[n]},\mathcal{S}_1,\dots,\mathcal{S}_n\}$ such that $\mu_p(\F\Delta \mathcal{H})\le C_{\ref{lem:biased-FKN}}(p)\varepsilon$.
\end{theorem}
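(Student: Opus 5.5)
We will derive Theorem~\ref{lem:biased-FKN} from the standard $p$-biased FKN theorem of Kindler and Safra~\cite{kindler2002property,kindler2002noise}. That theorem says the following. If $f\colon\{0,1\}^n\to\{0,1\}$ satisfies $\sum_{|S|\ge2}\hat f(S)^2\le\varepsilon$, then $f$ is $C'(p)\varepsilon$-close in $\mu_p$-measure to a Boolean function depending on at most one coordinate. Such a function is one of $\varnothing$, $2^{[n]}$, $\mathcal S_i$ or $\mathcal S_i^0$. We apply it to $f=\mathbbm 1_{\F}$. Since $f$ is $\{0,1\}$-valued, $\mu_p(\F\Delta\mathcal H)=\|f-\mathbbm 1_{\mathcal H}\|_2^2$, so distances in the two formulations agree. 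If the returned function is a constant or a star, we are done with $C=C'(p)$.

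The only remaining case is the anti-star $\mathcal S_i^0$, and we handle it using monotonicity as in the computation preceding the statement. Split $\F$ by coordinate $i$ into $\F^0$ and $\F^1$; since $\F$ is increasing, $\F^0\subseteq\F^1$. Then
\[
\delta:=\mu_p(\F\Delta\mathcal S_i^0)=(1-p)\bigl(1-\mu_p(\F^0)\bigr)+p\,\mu_p(\F^1).
\]
Using $\mu_p(\F^1)\ge\mu_p(\F^0)$, we get $\delta\ge(1-p)(1-t)+pt\ge\min\{p,1-p\}$, where $t=\mu_p(\F^0)$.

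FKN gives $\delta\le C'(p)\varepsilon$, so $\varepsilon\ge\min\{p,1-p\}/C'(p)$. In this regime any constant family works. For instance, $\mu_p(\F\Delta\varnothing)=\mu_p(\F)\le1\le \frac{C'(p)}{\min\{p,1-p\}}\varepsilon$.

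Hence the theorem holds with
\[
C_{\ref{lem:biased-FKN}}(p)=\max\Bigl\{C'(p),\ \frac{C'(p)}{\min\{p,1-p\}}\Bigr\}=\frac{C'(p)}{\min\{p,1-p\}}.
\]

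The only care needed is in the first step: the cited biased FKN theorem must be stated for $\{0,1\}$-valued functions with the $\mu_p$ Fourier basis and with an $L^2$ (equivalently, measure) distance. If the source instead measures distance to a $\pm1$-valued dictator, or normalises the Fourier basis differently, this costs only a factor depending on $p$. The monotone reduction itself is elementary.
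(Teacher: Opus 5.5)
Your proposal is correct and is the paper's own argument: you apply the standard Kindler--Safra biased FKN theorem, and in the anti-star case you use $\F^0\subseteq\F^1$ to get $\mu_p(\F\Delta\mathcal S_i^0)\ge\min\{p,1-p\}$. This bounds $\varepsilon$ below, so a constant family works once the constant is enlarged to $C'(p)/\min\{p,1-p\}$; your explicit constant and normalisation remark just spell out what the paper states in the paragraph before the theorem.
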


\subsection{Stability form of the AM--GM inequality}
We shall use the following standard stability form of the AM--GM inequality due to Cartwright and Field \cite{CF1978AM-GM}.
\begin{theorem}[\cite{CF1978AM-GM}]\label{thm:stability-AM-GM}
    Let $r\ge 2$ and set $p_*:=\frac{r-1}{r}$. Then for every $y_1,\dots,y_r\in [0,r-1]$ satisfying $\sum_{i=1}^r y_i=r-1$, we have $$p_*-\prod_{i=1}^r y_i^{\frac{1}{r}}\ge\frac{1}{2r(r-1)}\sum_{i=1}^r (y_i-p_*)^2.$$
\end{theorem}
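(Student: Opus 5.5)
The plan is to deduce the statement directly from the Cartwright--Field inequality, with one small degenerate case handled separately. The Cartwright--Field refinement of AM--GM says the following. For nonnegative reals $y_1,\dots,y_r$ with arithmetic mean $A=\frac1r\sum_i y_i$, geometric mean $G=\prod_i y_i^{1/r}$ and $M=\max_i y_i>0$,
\[
A-G\ \ge\ \frac{1}{2M}\cdot\frac1r\sum_{i=1}^r (y_i-A)^2 .
\]

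First I specialise to our hypotheses. The constraint $\sum_i y_i=r-1$ gives $A=\frac{r-1}{r}=p_*$. The constraint $y_i\in[0,r-1]$ gives $M\le r-1$, so $\frac{1}{2M}\ge\frac{1}{2(r-1)}$. Substituting into the displayed inequality yields
\[
p_*-\prod_{i=1}^r y_i^{1/r}\ \ge\ \frac{1}{2r(r-1)}\sum_{i=1}^r (y_i-p_*)^2 ,
\]
which is exactly the claim. The case $M=0$ cannot occur, since $\sum_i y_i=r-1>0$.

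Second, I check the boundary situation where some $y_i=0$, so that $G=0$, since the Cartwright--Field proof is usually written for positive reals. Here I would argue by continuity: both sides of the target inequality are continuous on the compact simplex $\{y\in[0,r-1]^r:\sum_i y_i=r-1\}$. The inequality holds on the relative interior, where all $y_i>0$. Hence it extends to the closure.

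Third, if a self-contained argument is wanted, I would reproduce the Cartwright--Field proof rather than a naive Taylor bound on $\log$. The naive route goes through the integral estimate
\[
\log y\le \log A+\frac{y-A}{A}-\frac{(y-A)^2}{2AM}\qquad (0<y\le M).
\]
This only gives $G\le Ae^{-x}$ with $x=\frac{V}{2AM}$ and $V=\frac1r\sum_i(y_i-A)^2$. That yields $A-G\ge A(1-e^{-x})$, which falls short of the required $Ax$, because $1-e^{-x}<x$. Since the constant $\frac{1}{2r(r-1)}$ in the statement is exactly the Cartwright--Field constant with $M$ replaced by $r-1$, there is no slack to absorb this loss.

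The main obstacle is therefore this sharp constant. The route I would follow is to vary one coordinate at a time with the others fixed. I would study
\[
\phi(t)=A(t)-G(t)-\frac{1}{2M}\,V(t),
\]
where $A(t)$, $G(t)$ and $V(t)$ are the arithmetic mean, geometric mean and normalised variance after replacing a single coordinate $y_k$ by $t$. The aim is to show, by an induction on the number of coordinates differing from $A$, that moving a coordinate toward the current mean never increases $\phi$. This follows Cartwright and Field's original argument, and in the paper I would simply cite it, as the statement is attributed to \cite{CF1978AM-GM}.
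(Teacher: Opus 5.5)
Your proposal is correct and matches the paper, which simply cites Cartwright and Field. Specialising their bound $A-G\ge \frac{1}{2M}\cdot\frac1r\sum_i (y_i-A)^2$ with $A=p_*$ and $M\le r-1$ gives exactly the stated constant, and your continuity argument correctly covers the boundary case where some $y_i=0$, which the paper passes over silently since Cartwright--Field is stated for positive reals.
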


We shall need the following variant in which the total sum is allowed to be smaller than $r-1$. This is the form needed after applying the critical-sum lemma (Lemma~\ref{lem:critical-sum}) gives the required upper bound on the sum of the critical-bias measures, and the lemma below then converts near-optimality of the product into closeness to the balanced value $p_*$.

\begin{lemma}\label{lem:agm-stability-r}
Let $r\ge 2$ and set $p_*:=\frac{r-1}{r}$.  For every $x_1,\dots,x_r\ge 0$ satisfying $\sum_{i=1}^r x_i\le r-1$, one has 
$$p_*^r-\prod_{i=1}^r x_i\ge \frac{p_*^{r-1}}{4r(r-1)}\sum_{i=1}^r (x_i-p_*)^2.$$
\end{lemma}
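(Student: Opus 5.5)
The plan is to reduce to Theorem~\ref{thm:stability-AM-GM} by rescaling to sum exactly $r-1$, and then absorb the slack $r-1-\sum_i x_i$ into a linear term. Let $s:=\sum_i x_i\le r-1$.

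\emph{Degenerate case $s=0$.} Then all $x_i=0$. The left-hand side is $p_*^r$, and the right-hand side is $\frac{p_*^{r-1}}{4r(r-1)}\cdot r p_*^2\le p_*^r$, so the claim holds.

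\emph{Rescaling.} Otherwise set $t:=s/(r-1)\in(0,1]$ and $y_i:=x_i/t$. Then $y_i\ge0$ and $\sum_i y_i=r-1$, so each $y_i\le r-1$. Put $G:=\prod_i y_i^{1/r}$ and $D:=\sum_i(y_i-p_*)^2$. Theorem~\ref{thm:stability-AM-GM} gives
\[
p_*-G\ge \frac{D}{2r(r-1)}.
\]
In particular $G\le p_*$, and also $\prod_i x_i=(tG)^r$.

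\emph{Lower bound on the left-hand side.} Since $0\le tG\le p_*$, the elementary inequality $a^r-b^r\ge a^{r-1}(a-b)$ for $0\le b\le a$ applies. Combined with the decomposition $p_*-tG=(1-t)p_*+t(p_*-G)$, it yields
\[
p_*^r-\prod_i x_i\;\ge\; p_*^{r-1}\Bigl((1-t)p_*+\frac{tD}{2r(r-1)}\Bigr).
\]
It is important to linearize $a^r-b^r$ at $b=tG$ rather than at $b=G$. Otherwise one picks up a factor $t^r$ in front of $D$, which cannot be matched on the right-hand side.

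\emph{Upper bound on the right-hand side.} Write $x_i-p_*=t(y_i-p_*)-(1-t)p_*$ and use $(u+v)^2\le 2u^2+2v^2$ together with $t\le1$:
\[
\sum_i(x_i-p_*)^2\le 2t^2D+2r(1-t)^2p_*^2\le 2tD+2rp_*^2(1-t).
\]
Multiplying by $\frac{p_*^{r-1}}{4r(r-1)}$ bounds the right-hand side of the lemma by
\[
p_*^{r-1}\Bigl(\frac{tD}{2r(r-1)}+\frac{p_*^2(1-t)}{2(r-1)}\Bigr).
\]
Since $\frac{p_*}{2(r-1)}=\frac1{2r}\le1$, this is at most the lower bound obtained above, which proves the lemma.

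The only real obstacle is the bookkeeping in the last two steps: the factors of $t$ in the lower bound must match those in the upper bound. The choice of linearization point handles this.
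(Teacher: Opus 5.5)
Your proof is correct, but it normalizes differently from the paper.

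The paper shifts additively, $y_i=x_i+\frac{s}{r}$ with $s=r-1-\sum_i x_i$. This gives the exact identity $\sum_i(x_i-p_*)^2=\sum_i(y_i-p_*)^2+\frac{s^2}{r}$, but only the comparison $\prod_i x_i\le\prod_i y_i$, which throws the slack away. So the paper needs a second, separate AM--GM estimate $p_*^r-\prod_i x_i\ge p_*^r s^2/(r-1)^2$, and then averages the two lower bounds. That averaging is where the extra factor $2$ in $4r(r-1)$ comes from.

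Your multiplicative rescaling keeps the product exact, $\prod_i x_i=(tG)^r$. Linearizing at $tG$ splits $p_*-tG$ into $(1-t)p_*$, which is linear in the slack, plus $t(p_*-G)$. A single chain therefore controls both contributions and no averaging is needed; the only price is the separate case $\sum_i x_i=0$.

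Your step $(u+v)^2\le 2u^2+2v^2$ is unnecessary. Since $\sum_i(y_i-p_*)=0$, the cross term vanishes and $\sum_i(x_i-p_*)^2=t^2D+r(1-t)^2p_*^2$ exactly. With this identity your comparison gives the lemma with constant $\frac{p_*^{r-1}}{2r(r-1)}$. In other words, the constant that Cartwright--Field yields on the hyperplane $\sum_i x_i=r-1$ persists on the whole region $\sum_i x_i\le r-1$ at no cost.

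One correction to a side remark. You say the split $p_*^r(1-t^r)+t^r(p_*^r-G^r)$ cannot be made to work; that is too strong. It fails term by term, but a case split rescues it. If $t^{r-2}\ge\frac12$, the $t^rD$ term absorbs half of $t^2D$. If $t^{r-2}<\frac12$, then $p_*^r(1-t^r)>\frac{p_*^r}{2}$, and this dominates the lemma's right-hand side because $D\le\frac{(r-1)^3}{r}$. Your choice of linearization point simply avoids this case split.
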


\begin{proof}

Set $s:=r-1-\sum_{i=1}^r x_i\ge 0$ and $y_i:=x_i+\frac{s}{r}$. Then $\sum_{i=1}^r y_i=r-1$ and $x_i\le y_i$ for every $i$. Also, $y_i\ge0$ for every $i$, and since the $y_i$'s are nonnegative and sum to $r-1$, we have $y_i\le r-1$ for every $i$. Hence $\prod_{i=1}^r x_i\le \prod_{i=1}^r y_i$. By Theorem~\ref{thm:stability-AM-GM}, $p_*-\prod_{i=1}^r y_i^{\frac{1}{r}}\ge \frac{1}{2r(r-1)}\sum_{i=1}^{r}(y_i-p_*)^2$. If $z:=\prod_{i=1}^r y_i^{\frac{1}{r}}$, then $0\le z\le p_*$ by AM--GM, and so $p_*^r-z^r\ge p_*^{r-1}(p_*-z)$. Thus $p_*^r-\prod_{i=1}^r y_i\ge p_*^{r-1}(p_*-\prod_{i=1}^r y_i^{\frac{1}{r}})$, and we have 
    \begin{equation}\label{eq:agm-tangent-r}
        p_*^r-\prod_{i=1}^r x_i\ge p_*^r-\prod_{i=1}^r y_i\ge \frac{p_*^{r-1}}{2r(r-1)}\sum_{i=1}^r (y_i-p_*)^2.
    \end{equation}
    Since $x_i-p_*=(y_i-p_*)-\frac{s}{r}$ and $\sum_{i=1}^r (y_i-p_*)=0$, we have
    \begin{equation}\label{eq:displament-y-x}
        \sum_{i=1}^r (x_i-p_*)^2=\sum_{i=1}^r (y_i-p_*)^2+\frac{s^2}{r}.
    \end{equation}
By AM--GM,
$\prod_{i=1}^r x_i\le\left(\frac{r-1-s}{r}\right)^r=\left(p_*-\frac{s}{r}\right)^r.$
Consequently, 
\begin{equation}\label{eq:agm-normal-r}
p_*^r-\prod_{i=1}^r x_i\ge p_*^r-\left(p_*-\frac{s}{r}\right)^r=p_*^r\left(1-\left(1-\frac{s}{r-1}\right)^r\right)\ge p_*^r\cdot \frac{s^2}{(r-1)^2},
\end{equation}
where the last inequality follows from $r\ge 2$ and $0\le s\le r-1$. 
Combining \eqref{eq:agm-tangent-r}, \eqref{eq:displament-y-x} and \eqref{eq:agm-normal-r} gives
$$p_*^r-\prod_{i=1}^r x_i\ge \frac{1}{2}\min\left\{\frac{p_*^{r-1}}{2r(r-1)},r\cdot \frac{p_*^r}{(r-1)^2}\right\}\left(\sum_{i=1}^r (y_i-p_*)^2+\frac{s^2}{r}\right)=\frac{p_*^{r-1}}{4r(r-1)}\sum_{i=1}^r (x_i-p_*)^2.$$
This completes the proof. 
\end{proof}

\section{Proof of Theorem~\ref{thm:main}}\label{sec:extremal}

\subsection{Critical-sum lemma}
We begin with the critical-bias sum inequality that drives the later extremal arguments.
\begin{lemma}\label{lem:critical-sum}
If $r\ge 2$ and $\F_1,\dots,\F_r\subseteq 2^{[n]}$ are $r$-cross-intersecting, then
\[
\sum_{i=1}^r \mu_{(r-1)/r}(\F_i)\le r-1.
\]
\end{lemma}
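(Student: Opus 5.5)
The plan is to use the coordinatewise coupling described in the introduction. Write $p_*=\frac{r-1}{r}$. For each coordinate $a\in[n]$ we choose independently a label $R_a\in[r]$ uniformly at random. We then define random sets
\[
X_i:=\{a\in[n]: R_a\neq i\},\qquad i\in[r].
\]
So each coordinate is placed in every $X_i$ except the one indexed by its label.

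First, we check the marginals. For fixed $i$, the events $\{a\in X_i\}=\{R_a\ne i\}$ are independent over $a$, and each has probability $1-\frac1r=p_*$. Hence $X_i$ has distribution $\mu_{p_*}$, and $\mathbb P(X_i\in\F_i)=\mu_{p_*}(\F_i)$. The sets $X_i$ are strongly dependent on one another, but only their marginals enter the argument.

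Second, the intersection is always empty. Every coordinate $a$ is missing from $X_{R_a}$, so $X_1\cap\cdots\cap X_r=\varnothing$ deterministically. Since the families are $r$-cross-intersecting, the events $\{X_i\in\F_i\}$ can never all hold at once. (If some $\F_i$ is empty this is trivially true, and it does not affect the argument.) Therefore
\[
\sum_{i=1}^r \mathbbm{1}[X_i\in\F_i]\le r-1
\]
pointwise.

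Finally, we take expectations and use linearity of expectation:
\[
\sum_{i=1}^r\mu_{p_*}(\F_i)=\mathbb E\sum_{i=1}^r\mathbbm{1}[X_i\in\F_i]\le r-1.
\]
There is no real obstacle. The only points to check are that each marginal is exactly $\mu_{p_*}$ (independence across coordinates) and that no monotonicity or up-closure is needed, which holds because cross-intersection is used only through the pointwise bound.
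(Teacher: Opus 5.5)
Your proof is correct and is the same argument as the paper's. Both label each coordinate uniformly from $[r]$, observe that each $X_i$ has law $\mu_{(r-1)/r}$ while $X_1\cap\cdots\cap X_r=\varnothing$ always, bound the indicator sum by $r-1$ pointwise, and take expectations.
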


\begin{proof}
For each $j\in[n]$, independently choose a random index $R_j\in[r]$ uniformly from $\{1,2,\dots,r\}$. Define random sets $X_1,\dots,X_r\subseteq[n]$ by $j\in X_i$ if and only if $R_j\neq i$. In words, at coordinate $j$ exactly one of the $r$ sets $X_1,\dots,X_r$ misses $j$, namely $X_{R_j}$, while all the others contain $j$.

Fix $i\in[r]$. For each $j\in[n]$, we have
\[
\PP(j\in X_i)=\PP(R_j\neq i)=\frac{r-1}{r}.
\]
Since the choices of the $R_j$ are independent across $j$, each $X_i$ is distributed according to $\mu_{(r-1)/r}$. Therefore,
\[
\PP(X_i\in \F_i)=\mu_{(r-1)/r}(\F_i),\quad \forall i\in[r].
\]

On the other hand, for every $j\in[n]$ there is exactly one index $i$ with $j\notin X_i$. Hence no element of $[n]$ belongs to all of $X_1,\dots,X_r$, so $\cap_{i=1}^r X_i=\varnothing$ with probability $1$. Note that events $\{X_i\in \F_i\}$ cannot all occur simultaneously, since the families are $r$-cross-intersecting. Thus we have
\[
\sum_{i=1}^r \mathbbm{1}_{\{X_i\in \F_i\}}\le r-1.
\]
Taking expectations finishes the proof.
\end{proof}

\subsection{The comparison lemma}
We next record the monotonicity statement that transports critical-bias estimates to smaller biases.

\begin{lemma}\label{lem:comparison}
Let $0<p\le q<1$. If $\mathcal{F}\subseteq 2^{[n]}$ is increasing, then
\[
\mu_p(\mathcal{F})\le \mu_q(\mathcal{F})^{\frac{\log p}{\log q}}.
\]
Equality holds for increasing subcubes. 
\end{lemma}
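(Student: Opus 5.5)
We argue by induction on $n$, as the paper announces, with the equivalent formulation: setting $\alpha:=\log p/\log q\ge 1$, we must show $\mu_p(\F)\le\mu_q(\F)^{\alpha}$ for every increasing $\F\subseteq 2^{[n]}$. For $n=0$ the only families are $\varnothing$ and $\{\varnothing\}$, with measures $0$ and $1$, so the inequality holds since $0^\alpha=0$ and $1^\alpha=1$. (If $\F=\varnothing$ in any dimension both sides vanish, so assume $\F\neq\varnothing$.)

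For the induction step we slice on coordinate $n$. Let $\F^0=\{A\subseteq[n-1]:A\in\F\}$ and $\F^1=\{A\subseteq[n-1]:A\cup\{n\}\in\F\}$. Both are increasing and $\F^0\subseteq\F^1$ because $\F$ is increasing. Write $a=\mu_q(\F^0)\le b=\mu_q(\F^1)$. By induction,
\[
\mu_p(\F)=(1-p)\mu_p(\F^0)+p\,\mu_p(\F^1)\le (1-p)a^\alpha+p\,b^\alpha ,
\]
and $\mu_q(\F)=(1-q)a+qb$. It therefore suffices to prove the two-variable inequality
\[
(1-p)a^\alpha+p\,b^\alpha\le \bigl((1-q)a+qb\bigr)^\alpha\qquad\text{for }0\le a\le b\le 1,
\]
where $p=q^\alpha$.

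This two-point inequality is the main step. By homogeneity of degree $\alpha$ we may set $b=1$ and $t=a\in[0,1]$, so we need $g(t):=\bigl((1-q)t+q\bigr)^\alpha-(1-q^\alpha)t^\alpha-q^\alpha\ge 0$. At the endpoints, $g(0)=q^\alpha-q^\alpha=0$ and $g(1)=1-(1-q^\alpha)-q^\alpha=0$.

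To get positivity in between, set $u=t^\alpha\in[0,1]$ and consider $h(u)=\bigl((1-q)u^{1/\alpha}+q\bigr)^\alpha$. The function $\phi(t)=((1-q)t+q)/t$ equals $(1-q)+q/t$, which is decreasing and convex in $t$. We have $h(u)=u\,\phi(u^{1/\alpha})^\alpha$, and the claim is that $h$ is concave in $u$. Equivalently, the map $u\mapsto\bigl((1-q)u^{1/\alpha}+q\bigr)^\alpha$ is concave for $\alpha\ge 1$; this is the concavity of a generalized power mean $M_{1/\alpha}$ in one of its arguments, since $M_s$ with $s\le1$ is concave on $\mathbb R_{\ge0}^2$. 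The inequality $g\ge 0$ is then exactly the statement that the concave function $h$ lies above its chord from $(0,q^\alpha)$ to $(1,1)$, namely $h(u)\ge q^\alpha+(1-q^\alpha)u$. This closes the induction.

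Equality for increasing subcubes $\{A:A\supseteq T\}$ follows from the direct computation $\mu_p=p^{|T|}=(q^{|T|})^\alpha$.

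The main obstacle is the clean justification of the concavity step: that $(x,y)\mapsto\bigl((1-q)x^{s}+q\,y^{s}\bigr)^{1/s}$ is concave for $0<s\le 1$. If this is preferred self-contained rather than cited, one can verify it directly by computing the second derivative of $h$ in $u$ and checking its sign.
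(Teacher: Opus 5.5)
Your proof is correct and is essentially the paper's: the same induction on $n$ through the slices $\mathcal{F}^0\subseteq\mathcal{F}^1$, reducing to the same two-point inequality $(1-p)a^\alpha+p\,b^\alpha\le\bigl((1-q)a+qb\bigr)^\alpha$ with $p=q^\alpha$. The only difference is how that one-variable inequality is checked: the paper substitutes $t=1/(1+u)$ and shows $\bigl((1+u)^\alpha-1\bigr)/u^\alpha$ is decreasing, while you show $h(u)=\bigl((1-q)u^{1/\alpha}+q\bigr)^\alpha$ lies above its chord. Your concavity step needs no citation, since $h'(u)=(1-q)\bigl((1-q)+q\,u^{-1/\alpha}\bigr)^{\alpha-1}$ is visibly nonincreasing for $\alpha\ge1$.
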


\begin{proof}
We argue by induction on $n$. 
The case $n=0$ is immediate. 
Let $n\ge 1$, and suppose the statement has been proved in dimension $n-1$. For an increasing family  $\mathcal{F}\subseteq 2^{[n]}$, define its two sections with respect to coordinate $n$ by 
$$\mathcal{F}^0:=\{A\subseteq [n-1]: A\in\mathcal{F}\}, \quad \mathcal{F}^1:=\{A\subseteq [n-1]: A\cup\{n\}\in\mathcal{F}\}. $$
Both $\mathcal{F}^0$ and $\mathcal{F}^1$ are increasing families on $2^{[n-1]}$ and $\mathcal{F}^0\subseteq \mathcal{F}^1$. Set $\alpha:=\frac{\log p}{\log q}\ge 1$. By the induction hypothesis, 
$$\mu_p(\F^0)\le \mu_q(\F^0)^\alpha,\quad \mu_p(\F^1)\le \mu_q(\F^1)^\alpha.$$ 
Therefore,
\[
\mu_p(\F)=(1-p)\cdot\mu_p(\F^0)+p\cdot\mu_p(\F^1)\le (1-p)\cdot\mu_q(\F^0)^\alpha+p\cdot\mu_q(\F^1)^\alpha.
\]
Thus it remains to prove that, for all $0\le x\le y$,
\begin{equation}\label{ineq:main two-point}
 (1-p)x^\alpha+p y^\alpha\le \left((1-q)x+q y\right)^\alpha.
\end{equation}

If $y=0$, then also $x=0$, and both sides are $0$. Assume $y>0$ and put  $t:=\frac{x}{y}\in[0,1]$. After dividing by $y^\alpha$, it is enough to show that
$(1-p)t^\alpha+p\le \bigl((1-q)t+q\bigr)^\alpha.$
The endpoint cases $t=0$ and $t=1$ give equality, since $p=q^{\alpha}$. Thus assume that $0<t<1$ and write $t=\frac{1}{1+u}$ with $u=\frac{1-t}{t}>0$. Then the desired inequality is equivalent to
\[
(1+qu)^\alpha-1\ge q^\alpha\bigl((1+u)^\alpha-1\bigr).
\]
Now define
$\phi(u):=\frac{(1+u)^\alpha-1}{u^\alpha}.$
A direct differentiation gives
\[
\phi'(u)
=\frac{\alpha}{u^{\alpha+1}}\Bigl(u(1+u)^{\alpha-1}-\bigl((1+u)^\alpha-1\bigr)\Bigr)
=\frac{\alpha\bigl(1-(1+u)^{\alpha-1}\bigr)}{u^{\alpha+1}}\le 0,
\]
because $\alpha\ge 1$. Hence $\phi$ is decreasing on $(0,\infty)$. Since $0<q<1$, we have $qu\le u$, and therefore $\phi(qu)\ge \phi(u)$. This means
$\frac{(1+qu)^\alpha-1}{(qu)^\alpha}
\ge
\frac{(1+u)^\alpha-1}{u^\alpha}.$
Multiplying both sides by $(qu)^\alpha$ yields
\[
(1+qu)^\alpha-1\ge q^\alpha\bigl((1+u)^\alpha-1\bigr).
\]
We now apply \eqref{ineq:main two-point} with
    $x=\mu_q(\mathcal F^0)$ and $y=\mu_q(\mathcal F^1).$
This choice satisfies $0\le x\le y$, because $\mathcal F^0\subseteq\mathcal F^1$. Therefore
\[
\mu_p(\F)
\le
(1-p)x^\alpha+py^\alpha
\le
((1-q)x+qy)^\alpha
=
\mu_q(\F)^\alpha,
\]
which completes the induction.

Finally, if
$\mathcal{F}=\{A\subseteq[n]:\{1,\ldots,m\}\subseteq A\},$
then $\mu_p(\F)=p^m$ and $\mu_q(\F)=q^m$, so equality holds.
\end{proof}

\subsection{Proof of the product bound}
We now turn the critical sum inequalities into the stated product bounds. Theorem~\ref{thm:main} is the point where the two core lemmas enter directly.

\begin{proof}[Proof of Theorem~\ref{thm:main}]
First consider the case $p=0$. For each $i$, we have $\mu_0(\F_i)=1$ if and only if $\varnothing\in \F_i$, and otherwise $\mu_0(\F_i)=0$. Therefore, if some family $\F_i$ does not contain $\varnothing$, then the product is immediately $0$. If all $r$ families contained $\varnothing$, then taking $A_i=\varnothing\in \F_i$ for every $i$ would contradict the $r$-cross-intersecting condition. So the product is always $0$ when $p=0$.

Now assume $0<p\le \frac{r-1}{r}$. Let $q:=\frac{r-1}{r}$ and $\alpha:=\frac{\log p}{\log q}$. Then $\alpha\ge 1$ and $p=q^\alpha$. For each $i\in[r]$, let $\F_i^{\uparrow}$ be the up-closure of $\F_i$. Then $\F_i^{\uparrow}$ is increasing and $\F_i\subseteq \F_i^{\uparrow}$, so $\mu_p(\F_i)\le \mu_p(\F_i^{\uparrow})$ for any $i\in[r]$. Moreover, the families $\F_1^{\uparrow},\dots,\F_r^{\uparrow}$ are still $r$-cross-intersecting.  Hence
\begin{equation*}
    \begin{split}
  \prod_{i=1}^r \mu_p(\F_i)&\le \prod_{i=1}^r \mu_p(\F_i^{\uparrow})\overset{{\rm Lemma}~\ref{lem:comparison}}{\le} \prod_{i=1}^r\mu_q(\F_i^{\uparrow})^{\alpha}=\left(\prod_{i=1}^r \mu_q(\F_i^{\uparrow})\right)^{\alpha}\\
  &\overset{{\rm AM-GM}}{\le} \left(\frac{1}{r}\sum_{i=1}^r \mu_q(\F_i^{\uparrow})\right)^{r\cdot\alpha}\overset{{\rm Lemma}~\ref{lem:critical-sum}}{\le} \left(\frac{r-1}{r}\right)^{r\alpha}=\left(q^{\frac{\log p}{\log q}}\right)^r=p^r.    
    \end{split}
\end{equation*}
This proves the theorem.
\end{proof}

\section{Proof of Theorem~\ref{thm:r-cross-product-stability}}\label{sec:stability}

\subsection{Dual families}
For a family $\A\subseteq2^{[n]}$, define its dual family by
\[
    \A^*:=\{B\subseteq[n]:[n]\setminus B\notin\A\}.
\]
When several families $\A_1,\dots,\A_r$ are under consideration, we write
    $\B_i:=\A_i^*
    =
    \{B\subseteq[n]:[n]\setminus B\notin\A_i\}.$
Studying such dual families is natural in intersection problems.  If $\A$ is increasing, then
$\A^*$ is exactly the family of all hitting sets, or transversals, for $\A$: one has
$B\in\A^*$ if and only if
$B\cap A\ne\varnothing$
for every $A\in\A$.
Thus an increasing family $\A$ is intersecting if and only if $\A\subseteq\A^*$; maximal
intersecting up-sets are precisely the self-dual increasing families $\A=\A^*$. This self-dual viewpoint is classical in the enumeration of maximal intersecting families; see for example~\cite{BDDLS2015}. Dual pairs also appear naturally in probabilistic combinatorics, for instance in the tightness examples for Talagrand-type correlation inequalities studied by Kalai, Keller and Mossel \cite{KKM2016}.

For two increasing families $\A_1,\A_2\subseteq 2^{[n]}$, they are cross-intersecting if and only if $\A_1\subseteq \B_2$. Consequently $\mu_p(\A_1)\le \mu_p(\B_2)=1-\mu_{1-p}(\A_2)$, and in particular $\mu_{1/2}(\A_1)+\mu_{1/2}(\A_2)\le 1$.  This kind of dual comparison is a standard first step in several stability and threshold arguments for increasing families; see, for instance, \cite{EllisKellerLifshitzJEMS2019}. In the present $r$-cross-intersecting setting, however, the condition is genuinely higher-order and cannot be captured just by pairwise inclusions.  The useful replacement is the ordered-partition covering property in Lemma~\ref{lem:dual-family} below.

\begin{lemma}\label{lem:dual-family}
Let $r\ge 2$. Suppose that $\A_1,\dots,\A_r\subseteq 2^{[n]}$ are increasing and $r$-cross-intersecting, and let $\B_i:=\A_i^*$, $i\in[r]$, be their dual families. Then the following properties hold.
\begin{itemize}
    \item[(i)] Every $\B_i$ is an increasing family.
    \item[(ii)] $\mu_{1/r}(\B_i)=1-\mu_{(r-1)/r}(\A_i)$ for every $i\in[r]$.
    \item[(iii)] For every ordered partition $P_1,\dots,P_r$ of $[n]$ into possibly empty parts, there exists some $i\in[r]$ such that $P_i\in\B_i$.
\end{itemize}
\end{lemma}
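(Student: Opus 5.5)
The three parts can be checked directly from the definition $\B_i=\{B:[n]\setminus B\notin\A_i\}$. We do (i) and (ii) first, since they use only that $\A_i$ is increasing. For (i), take $B\in\B_i$ and $B\subseteq B'$. Then $[n]\setminus B'\subseteq[n]\setminus B$. If $[n]\setminus B'$ were in $\A_i$, then by monotonicity of $\A_i$ so would be $[n]\setminus B$, which contradicts $B\in\B_i$. Hence $B'\in\B_i$, and $\B_i$ is increasing.

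For (ii), we use complementation. If $B$ is distributed according to $\mu_{1/r}$, then $[n]\setminus B$ is distributed according to $\mu_{1-1/r}=\mu_{(r-1)/r}$, because each coordinate is flipped independently. Therefore
\[
\mu_{1/r}(\B_i)=\PP\bigl([n]\setminus B\notin\A_i\bigr)=1-\mu_{(r-1)/r}(\A_i).
\]
This step does not need monotonicity at all.

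For (iii), we argue by contradiction. Suppose $P_1,\dots,P_r$ is an ordered partition with $P_i\notin\B_i$ for every $i$. By definition of $\B_i$, this means $A_i:=[n]\setminus P_i\in\A_i$ for every $i$. Every $a\in[n]$ lies in exactly one part $P_{i(a)}$, so $a\notin A_{i(a)}$. Hence $A_1\cap\cdots\cap A_r=\varnothing$, contradicting the $r$-cross-intersecting hypothesis. This is the same mechanism as the coupling in Lemma~\ref{lem:critical-sum}, with $P_i$ playing the role of $\{j:R_j=i\}$.

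None of these steps is a real obstacle. The only care needed is to note which parts use the hypotheses: (i) uses that $\A_i$ is increasing, (iii) uses cross-intersection, and (ii) uses neither. Also, (iii) must allow empty parts $P_i$. An empty part means $A_i=[n]$, which causes no issue in the argument.
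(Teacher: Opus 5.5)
Your proof is correct and follows the paper's own argument essentially step for step: (i) by contradiction using that $\A_i$ is increasing, (ii) because complementation sends $\mu_{1/r}$ to $\mu_{(r-1)/r}$, and (iii) by contradiction, since $P_i\notin\B_i$ for all $i$ gives sets $[n]\setminus P_i\in\A_i$ whose common intersection is empty. Your side remarks are also accurate: (ii) needs neither hypothesis, (iii) uses only the cross-intersection property, and empty parts cause no trouble.
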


\begin{proof}
We first prove (i). Fix $i\in[r]$. Let $B\in\B_i$ and let $B'\supseteq B$. By the definition of $\B_i$, we have $[n]\setminus B\notin \A_i$. Since $B'\supseteq B$, we have $[n]\setminus B'\subseteq [n]\setminus B$. If $[n]\setminus B'\in\A_i$, then, as $\A_i$ is increasing, it would follow that $[n]\setminus B\in\A_i$, a contradiction. Hence $[n]\setminus B'\notin\A_i$, and therefore $B'\in\B_i$. Thus $\B_i$ is increasing.

Next we prove (ii). Let $X\sim\mu_{1/r}$ be a random subset of $[n]$. Then $[n]\setminus X\sim\mu_{(r-1)/r}$. Hence, by the definition of $\B_i$,
$$
\mu_{1/r}(\B_i)=\mathbb{P}_{X\sim\mu_{1/r}}(X\in \B_i)=\mathbb{P}_{X\sim\mu_{1/r}}([n]\setminus X\notin \A_i)=1-\mu_{(r-1)/r}(\A_i).
$$

Finally we prove (iii). Let $P_1,\dots,P_r$ be an ordered partition of $[n]$ into possibly empty parts.
Suppose for contradiction that $P_i\notin\B_i$ for every $i\in[r]$. Then, by the definition of $\B_i$, we have $[n]\setminus P_i\in\A_i$ for every $i\in[r]$. But since $P_1,\dots,P_r$ form a partition of $[n]$, we have $\bigcap_{i=1}^r([n]\setminus P_i)=\varnothing$. This contradicts the assumption that $\A_1,\dots,\A_r$ are $r$-cross-intersecting. Therefore $P_i\in\B_i$ for some $i\in[r]$, as desired.
\end{proof}

Lemma~\ref{lem:dual-family}(iii) is exactly what is needed for the Fourier argument.  If $x\in[r]^n$ is chosen uniformly and $C_i(x):=\{a\in[n]:x_a=i\}$, then $C_1(x),\dots,C_r(x)$ form a random ordered partition of $[n]$ into possibly empty parts, and each $C_i(x)$ has distribution $\mu_{1/r}$.
Lemma~\ref{lem:dual-family}(iii) gives $\sum_i \mathbbm{1}_{\B_i}(C_i(x))\ge 1$ for every $x\in[r]^n$, while near equality in the critical sum inequality gives $\sum_i\mu_{1/r}(\B_i)\le 1+\eta$.  Hence $\sum_i \mathbbm{1}_{\B_i}(C_i(x))$ is close to the constant function $1$.  This is the starting point for proving that the high-degree Fourier mass of the dual families is small, and then applying the biased FKN theorem to obtain the structural alternatives in Theorem~\ref{lem:q-stability-r}.

\subsection{Proof of Theorem~\ref{lem:q-stability-r}}

\begin{proof}[Proof of Theorem~\ref{lem:q-stability-r}]
Set 
$$\eta_{\ref{lem:q-stability-r}}(r):=\min\left\{\frac{r(r-2)}{3C_{\ref{lem:biased-FKN}}(\frac{1}{r})(r-1)^3+3r(r-2)},\frac{r-2}{2C_{\ref{lem:biased-FKN}}(\frac{1}{r})r(r-1)^3},\frac{r-2}{2r^rC_{\ref{lem:biased-FKN}}(\frac{1}{r})(r-1)^3}\right\}$$ and 
$$C_{\ref{lem:q-stability-r}}(r):=\frac{C_{\ref{lem:biased-FKN}}(\frac{1}{r})(r-1)^3+r(r-2)}{r(r-2)}.$$
For each $i\in[r]$, the up-closure $\A_i^{\uparrow}$ is increasing and $\A_i\subseteq\A_i^{\uparrow}$. Moreover, $\A_1^{\uparrow},\dots,\A_r^{\uparrow}$ are still $r$-cross-intersecting. 
Set
$\xi:=r-1-\sum_{i=1}^r\mu_{p_*}(\A_i^{\uparrow})$ and
$\delta:=\sum_{i=1}^r\mu_{p_*}(\A_i^{\uparrow}\setminus\A_i).$ By Lemma~\ref{lem:critical-sum}, applied to $\A_1^{\uparrow},\dots,\A_r^{\uparrow}$, and the assumption, we have $0\le \xi\le \eta$. Also,
$$\delta=\sum_{i=1}^r\mu_{p_*}(\A_i^{\uparrow})-\sum_{i=1}^r\mu_{p_*}(\A_i)\le (r-1-\xi)-(r-1-\eta)=\eta-\xi.$$
For each $i\in[r]$, let $\B_i$ be the dual family of $\A_i^{\uparrow}$, namely $\B_i:=\{B\subseteq[n]:[n]\setminus B\notin \A_i^{\uparrow}\}.$ By Lemma~\ref{lem:dual-family}, applied to $\A_1^{\uparrow},\dots,\A_r^{\uparrow}$, each $\B_i$ is increasing and 
$\mu_{1/r}(\B_i)=1-\mu_{p_*}(\A_i^{\uparrow})$ for every $i\in[r]$. Thus $\sum_{i=1}^r\mu_{1/r}(\B_i)=1+\xi$.

Let $x=(x_1,\dots,x_n)$ be chosen uniformly from $[r]^n$, and for each $i\in[r]$ define $C_i(x):=\{a\in[n]:x_a=i\}.$ Thus $C_1(x),\dots,C_r(x)$ form an ordered partition of $[n]$ into possibly empty parts. Define
$g_i(x):=\mathbbm{1}_{\B_i}(C_i(x))$. For fixed $i$, the random set $C_i(x)$ has distribution $\mu_{1/r}$, since each coordinate belongs to $C_i(x)$ independently with probability $1/r$. Hence
$$
\mathbb{E}_{x\sim[r]^n}[g_i(x)]=\mathbb{P}_{x\sim[r]^n}[C_i(x)\in \B_i]=\mu_{1/r}(\B_i).
$$
Moreover, by Lemma~\ref{lem:dual-family} (iii), for every $x\in[r]^n$ there exists some $i\in[r]$ such that $C_i(x)\in\B_i$. Therefore $\sum_{i=1}^r g_i(x)\ge 1$ for every $x\in[r]^n$. Consequently,
$$
\mathbb{E}_{x\sim[r]^n}\left[\sum_{i=1}^r g_i(x)-1\right]=\sum_{i=1}^r\mu_{1/r}(\B_i)-1=\xi.
$$
Since $0\le \sum_{i=1}^r g_i-1\le r-1$, we also have
\begin{equation}\label{eq:L2-defect}
\mathbb{E}_{x\sim[r]^n}\left[\left(\sum_{i=1}^r g_i(x)-1\right)^2\right]\le (r-1)\xi.
\end{equation}

\begin{claim}
For $y\in[r]$, we define a function $u_i(y)=\chi(\mathbbm{1}_{\{y=i\}})=\frac{\mathbbm{1}_{\{y=i\}}-\frac{1}{r}}{\sqrt{\frac{1}{r}(1-\frac{1}{r})}}$ for any $i\in[r]$. Then $\mathbb{E}_{y\sim[r]}[u_i(y)]=0$, $\mathbb{E}_{y\sim[r]}[u_i(y)^2]=1$, and for $i\ne j$,
$$\mathbb{E}_{y\sim[r]}[u_i(y)\cdot u_j(y)]=-\frac{1}{r-1}.$$
\end{claim}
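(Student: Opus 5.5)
The plan is a direct computation with $y$ uniform on $[r]$. Put $\sigma:=\sqrt{\frac1r(1-\frac1r)}=\frac{\sqrt{r-1}}{r}$, so that $u_i(y)=(\mathbbm{1}_{\{y=i\}}-\frac1r)/\sigma$. The random variable $\mathbbm{1}_{\{y=i\}}$ is Bernoulli with parameter $\frac1r$. Hence its mean is $\frac1r$ and its variance is $\frac1r(1-\frac1r)=\sigma^2$. Centering and dividing by $\sigma$ then gives $\mathbb{E}[u_i]=0$ and $\mathbb{E}[u_i^2]=1$ immediately.

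For the cross term with $i\ne j$, I would use that the events $\{y=i\}$ and $\{y=j\}$ are disjoint, so $\mathbbm{1}_{\{y=i\}}\mathbbm{1}_{\{y=j\}}\equiv0$. Expanding the covariance gives
\[
\mathbb{E}\Bigl[\bigl(\mathbbm{1}_{\{y=i\}}-\tfrac1r\bigr)\bigl(\mathbbm{1}_{\{y=j\}}-\tfrac1r\bigr)\Bigr]
=0-\tfrac1r\cdot\tfrac1r-\tfrac1r\cdot\tfrac1r+\tfrac1{r^2}=-\tfrac1{r^2}.
\]
Dividing by $\sigma^2=\frac{r-1}{r^2}$ yields $-\frac{1}{r-1}$.

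As a sanity check, $\sum_i u_i\equiv0$, because $\sum_i\mathbbm{1}_{\{y=i\}}=1$. Therefore $0=\mathbb{E}[u_i\sum_j u_j]=1+(r-1)c$, where $c$ is the common value of the off-diagonal correlations by symmetry. This forces $c=-\frac1{r-1}$ again.

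There is no real obstacle here. The only care needed is to use the correct normalization $\sigma^2=(r-1)/r^2$.
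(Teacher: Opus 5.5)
Your proposal is correct and follows essentially the paper's route. The paper justifies the claim only by noting that $\mathbbm{1}_{\{y=i\}}\sim\mu_{1/r}$, which gives the mean and variance, and it leaves the cross term implicit; your disjointness computation (covariance $-1/r^2$ divided by $\sigma^2=(r-1)/r^2$) supplies that step explicitly.
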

These follow from the fact that $\mathbbm{1}_{\{y=i\}}\sim\mu_{1/r}$ when $y\sim [r]$. For every $S\subseteq[n]$, write $u_i^S(x):=\prod_{a\in S}u_i(x_a)=\prod_{a\in S}\chi_a(\mathbbm{1}_{\{x_a=i\}}),\forall x\in[r]^n$, with the convention $u_i^\varnothing=1$. We shall use the following orthogonality observation.
\begin{claim}
For any $S,T\subseteq[n]$ with $S\neq T$ and any $i,j\in[r]$, $\mathbb{E}_{x\sim[r]^n}[u_i^S(x)\cdot u_j^T(x)]=0$.
\end{claim}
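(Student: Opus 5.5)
The claim follows from independence of coordinates under the uniform measure on $[r]^n$ together with the mean-zero property in the first claim. I will write the product explicitly, group the factors coordinate by coordinate, and find one coordinate whose factor has expectation zero.

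Since $x$ is uniform on $[r]^n$, the coordinates $x_1,\dots,x_n$ are independent and each is uniform on $[r]$. By definition,
\[
u_i^S(x)\,u_j^T(x)=\prod_{a\in S\cap T}u_i(x_a)u_j(x_a)\cdot\prod_{a\in S\setminus T}u_i(x_a)\cdot\prod_{a\in T\setminus S}u_j(x_a),
\]
and each coordinate $a$ appears in exactly one factor. Independence across coordinates then lets the expectation split as
\[
\prod_{a\in S\cap T}\mathbb{E}[u_i(x_a)u_j(x_a)]\cdot\prod_{a\in S\setminus T}\mathbb{E}[u_i(x_a)]\cdot\prod_{a\in T\setminus S}\mathbb{E}[u_j(x_a)].
\]

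Because $S\neq T$, the symmetric difference $S\,\Delta\,T$ is nonempty. Pick $a\in S\,\Delta\,T$; by symmetry we may assume $a\in S\setminus T$. The corresponding factor is $\mathbb{E}_{y\sim[r]}[u_i(y)]$, which is $0$ by the previous claim. The whole product therefore vanishes.

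This holds for every choice of $i,j$, including $i=j$, since the argument never uses the value of $\mathbb{E}[u_iu_j]$ on the shared coordinates. The only point needing care is that each coordinate contributes a single factor, so the split over coordinates is legitimate. Apart from that the argument is routine.
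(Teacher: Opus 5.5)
Your proof is correct and matches the paper's argument. You pick $a\in S\Delta T$, note that $x_a$ enters the product only through the single mean-zero factor $u_i(x_a)$ (or $u_j(x_a)$), and conclude by independence of the coordinates; your full coordinatewise factorization just spells out that same step more explicitly.
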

Indeed, choose $a\in S\Delta T$, say $a\in S\setminus T$. Then the product $u_i^S(x)u_j^T(x)$ contains the factor $u_i(x_a)$ and no other factor depending on $x_a$. Since $\mathbb{E}_{x_a\sim[r]}[u_i(x_a)]=0$ and the coordinates are independent, the expectation is $0$. In other words,
\begin{equation*}
\underset{x\sim[r]^n}{\mathbb{E}}\left[u_i^S(x)\cdot u_j^T(x)\right]=\underset{x_a\sim[r]}{\mathbb{E}}\left[\frac{\mathbbm{1}_{\{x_a=i\}}-\frac{1}{r}}{\sqrt{\frac{1}{r}(1-\frac{1}{r})}}\right]\cdot\underset{x_{[n]\setminus\{a\}}\sim[r]^{n-1}}{\mathbb{E}}\left[u_i^{S\setminus\{a\}}(x)\cdot u_j^T(x)\right]=0.
\end{equation*}

Now using $1/r$-biased Fourier expansion of $\mathbbm{1}_{\B_i}$, we have 
$$
g_i(x)=\mathbbm{1}_{\B_i}(C_i(x))=\sum_{S\subseteq[n]}\widehat{\mathbbm{1}_{\B_i}}(S)u_i^S(x),\quad \forall i\in[r].
$$
Therefore $\sum_{i=1}^r g_i(x)-1=(\sum_{i=1}^r\widehat{\mathbbm{1}_{\B_i}}(\varnothing)-1)+\sum_{S\neq \varnothing}\sum_{i=1}^r\widehat{\mathbbm{1}_{\B_i}}(S)u_i^S(x)$. By the orthogonality observation for different sets $S$, we get
\begin{equation*}
    \begin{split}
 \underset{x\sim[r]^n}{\mathbb{E}}\left[\left(\sum_{i=1}^r g_i(x)-1\right)^2\right]&=\left(\sum_{i=1}^r\widehat{\mathbbm{1}_{\B_i}}(\varnothing)-1\right)^2+\sum\limits_{S\neq\varnothing}\underset{x\sim[r]^n}{\mathbb{E}}\left[\left(\sum_{i=1}^r\widehat{\mathbbm{1}_{\B_i}}(S)u_i^S(x)\right)^2\right]\\
 &\ge \sum\limits_{|S|\ge2}\underset{x\sim[r]^n}{\mathbb{E}}\left[\left(\sum_{i=1}^r\widehat{\mathbbm{1}_{\B_i}}(S)u_i^S(x)\right)^2\right].
    \end{split}
\end{equation*}
We now estimate each block with $|S|\ge2$. Let $S\subseteq[n]$ with $|S|=s\ge 2$. The Gram matrix of the vectors
$u_1^S,\dots,u_r^S$ has diagonal entries $1$ and off-diagonal entries
$\left(-\frac{1}{r-1}\right)^s$. Indeed, this follows by multiplying the corresponding one-coordinate inner products over the $s$ coordinates in $S$. If $\rho_s:=\left(-\frac{1}{r-1}\right)^s$, then this Gram matrix is
$$(1-\rho_s)I+\rho_s J,$$
where $J$ is the all-one matrix. Its eigenvalues are $1-\rho_s$, with multiplicity $r-1$, and $1+(r-1)\rho_s$, with multiplicity $1$. Since $s\ge2$, both eigenvalues are at least $1-\frac{1}{(r-1)^2}=\frac{r(r-2)}{(r-1)^2}>0$.

Combining with~\eqref{eq:L2-defect}, we get
$$\frac{r(r-2)}{(r-1)^2}\cdot\sum_{i=1}^r\sum_{|S|\ge 2}\widehat{\mathbbm{1}_{\B_i}}(S)^2\le (r-1)\xi.$$
Thus, for every $i\in[r]$,
$$\sum_{|S|\ge 2}\widehat{\mathbbm{1}_{\B_i}}(S)^2\le \frac{(r-1)^3}{r(r-2)}\xi.$$

By~\cref{lem:biased-FKN}, applied with bias $\frac{1}{r}$ and
$\varepsilon=\frac{(r-1)^3}{r(r-2)}\xi$, for every $i\in[r]$ there is $\mathcal{H}_i\in\{\varnothing,2^{[n]},\mathcal{S}_1,\dots,\mathcal{S}_n\}$
such that
\begin{equation}\label{eq:Di-close-to-extremal}
\mu_{1/r}(\B_i\Delta \mathcal H_i)\le \frac{C_{\ref{lem:biased-FKN}}(\frac{1}{r})(r-1)^3}{r(r-2)}\xi.
\end{equation}
\emph{Case 1. $\mathcal H_s=2^{[n]}$ for some $s\in[r]$}. Then \eqref{eq:Di-close-to-extremal} gives $\mu_{1/r}(\B_s)\ge 1-\frac{C_{\ref{lem:biased-FKN}}(\frac{1}{r})(r-1)^3}{r(r-2)}\xi$. Since $\sum_i\mu_{1/r}(\B_i)=1+\xi$, it follows that, for every $i\ne s$, $\mu_{1/r}(\B_i)\le (\frac{C_{\ref{lem:biased-FKN}}(\frac{1}{r})(r-1)^3}{r(r-2)}+1)\xi$. Therefore,
$$\mu_{p_*}(\A_s^{\uparrow})=1-\mu_{1/r}(\B_s)\le \frac{C_{\ref{lem:biased-FKN}}(\frac{1}{r})(r-1)^3}{r(r-2)}\xi,$$
and, for every $i\ne s$,
$$\mu_{p_*}(\A_i^{\uparrow})=1-\mu_{1/r}(\B_i)\ge 1-\left(1+\frac{C_{\ref{lem:biased-FKN}}(\frac{1}{r})(r-1)^3}{r(r-2)}\right)\xi.$$
Since $\A_s\subseteq\A_s^{\uparrow}$, we have
$$\mu_{p_*}(\A_s)\le \mu_{p_*}(\A_s^{\uparrow})\le \frac{C_{\ref{lem:biased-FKN}}(\frac{1}{r})(r-1)^3}{r(r-2)}\eta\le C_{\ref{lem:q-stability-r}}(r)\eta.$$
For every $i\ne s$, using $\mu_{p_*}(\A_i^{\uparrow}\setminus\A_i)\le\delta\le\eta-\xi$, we have
\begin{equation*}
\begin{split}
\mu_{p_*}(\A_i)&=\mu_{p_*}(\A_i^{\uparrow})-\mu_{p_*}(\A_i^{\uparrow}\setminus\A_i)\ge 1-\left(1+\frac{C_{\ref{lem:biased-FKN}}(\frac{1}{r})(r-1)^3}{r(r-2)}\right)\xi-(\eta-\xi)\\
&=1-\frac{C_{\ref{lem:biased-FKN}}(\frac{1}{r})(r-1)^3}{r(r-2)}\xi-\eta\ge 1-C_{\ref{lem:q-stability-r}}(r)\eta.
\end{split}
\end{equation*}
Since $C_{\ref{lem:q-stability-r}}(r)\eta\le C_{\ref{lem:q-stability-r}}(r)\eta_{\ref{lem:q-stability-r}}(r)\le \frac13$, the index $s$ is unique. This gives alternative (ii).

\emph{Case 2. No $\mathcal H_i$ is $2^{[n]}$}. We claim that no $\mathcal H_i$ is $\varnothing$. Indeed, if $\mathcal H_s=\varnothing$, then $\mu_{1/r}(\B_s)\le \frac{C_{\ref{lem:biased-FKN}}(\frac{1}{r})(r-1)^3}{r(r-2)}\xi$. Since every other $\mathcal H_i$ is either $\varnothing$ or a $1$-star, we have $\mu_{1/r}(\B_i)\le \frac1r+\frac{C_{\ref{lem:biased-FKN}}(\frac{1}{r})(r-1)^3}{r(r-2)}\xi$ for every $i\ne s$. Therefore
\[
\sum_{i=1}^r\mu_{1/r}(\B_i)\le \frac{r-1}{r}+\frac{C_{\ref{lem:biased-FKN}}(\frac{1}{r})(r-1)^3}{r-2}\xi
<1.
\]
This contradicts $\sum_i\mu_{1/r}(\B_i)=1+\xi\ge 1$. Thus every $\mathcal H_i$ is a $1$-star.

Write $\mathcal H_i=S_{j_i}'$, where $S_{j_i}':=\{B\subseteq[n]:j_i\in B\}$. We claim that $j_1=\cdots=j_r$. Suppose not. For each coordinate $a\in[n]$, let
$I_a:=\{i\in[r]:j_i=a\}.$ Thus $I_a$ is the set of indices whose approximating star has center $a$. Since $j_1,\dots,j_r$ are not all equal,  $I_a\ne[r]$ for every $a\in[n]$. Let $J:=\{j_1,\dots,j_r\}$. For every $a\in J$, the set $[r]\setminus I_a$ is nonempty. Hence we may choose a coloring $x\in[r]^n$ such that $x_a\notin I_a$ for $a\in J$. Let $G$ be the set of all such colorings. Since $|J|\le r$ and each restricted coordinate has at least one available color, we have
$$\mathbb P_{x\sim[r]^n}(x\in G)=\prod_{a\in J}\frac{r-|I_a|}{r}\ge r^{-|J|}\ge r^{-r}.$$
Fix $i\in[r]$. Since $i\in I_{j_i}$ and $x_{j_i}\notin I_{j_i}$, we have $x_{j_i}\ne i$. Therefore $j_i\notin C_i(x)$, and so $C_i(x)\notin S_{j_i}'$
for every $i\in[r]$ and every $x\in G$.

On the other hand, for every $x\in[r]^n$, the sets $C_1(x),\dots,C_r(x)$ form an ordered partition of $[n]$ into possibly empty parts. By Lemma~\ref{lem:dual-family} (iii), there exists some $i\in[r]$ such that $C_i(x)\in\B_i$. Thus, for every $x\in G$, there exists some $i\in[r]$ such that $C_i(x)\in\B_i$ and $C_i(x)\notin S_{j_i}'$. Hence
$$G\subseteq \bigcup_{i=1}^r\{x\in[r]^n:C_i(x)\in\B_i\Delta S_{j_i}'\}.$$
Taking probabilities and using that $C_i(x)\sim\mu_{1/r}$, we get
\[
r^{-r}\le \mathbb P_{x\sim[r]^n}(x\in G)\le \sum_{i=1}^r\mu_{1/r}(\B_i\Delta S_{j_i}')\le \frac{C_{\ref{lem:biased-FKN}}(\frac{1}{r})(r-1)^3}{r-2}\eta
<\frac{1}{r^r}.\]
Hence $j_1=\cdots=j_r$, say $j_i=j$ for every $i\in[r]$.

Finally, we first pass back to the up-closures. If $A=[n]\setminus B$, then $B\in\B_i\Leftrightarrow A\notin\A_i^{\uparrow}$, and $B\in \mathcal{S}_j'\Leftrightarrow A\notin \mathcal{S}_j$, where $\mathcal{S}_j=\{A\subseteq[n]:j\in A\}$ is the original $1$-star. Hence membership in $\B_i\Delta \mathcal{S}_j'$ is exactly transformed into membership in $\A_i^{\uparrow}\Delta \mathcal{S}_j$. Since complementation sends \(\mu_{1/r}\) to \(\mu_{p_*}\),
$$\mu_{p_*}(\A_i^{\uparrow}\Delta \mathcal{S}_j)=\mu_{1/r}(\B_i\Delta \mathcal{S}_j')\le \frac{C_{\ref{lem:biased-FKN}}(\frac{1}{r})(r-1)^3}{r(r-2)}\xi$$
for every $i\in[r]$. Since $\A_i\subseteq\A_i^{\uparrow}$, we have
\begin{equation*}
\begin{split}
\mu_{p_*}(\A_i\Delta \mathcal{S}_j)&\le \mu_{p_*}(\A_i^{\uparrow}\Delta \mathcal{S}_j)+\mu_{p_*}(\A_i^{\uparrow}\setminus\A_i)\le \frac{C_{\ref{lem:biased-FKN}}(\frac{1}{r})(r-1)^3}{r(r-2)}\xi+\delta\\
&\le \frac{C_{\ref{lem:biased-FKN}}(\frac{1}{r})(r-1)^3}{r(r-2)}\xi+\eta-\xi\le C_{\ref{lem:q-stability-r}}(r)\eta
\end{split}
\end{equation*}
for every $i\in[r]$. This gives alternative (i), and completes the proof.
\end{proof}

\begin{cor}\label{cor:q-stability-balanced-r}
Let $r\ge 3$, $p_*=\frac{r-1}{r}$, and let $0<\eta<\eta_{\ref{lem:q-stability-r}}(r)$. Let $\A_1,\dots,\A_r\subseteq 2^{[n]}$ be increasing and $r$-cross-intersecting. Suppose that
$\sum_{i=1}^r \mu_{p_*}(\A_i)\ge r-1-\eta$
and $\left|\mu_{p_*}(\A_i)-{p_*}\right|\le \frac1{4r}$ for all $i\in[r]$. Then there exists a $1$-star $\mathcal{S}_j$ such that
$$\mu_{p_*}(\A_i\Delta \mathcal{S}_j)\le C_{\ref{lem:q-stability-r}}(r)\eta,
\quad \forall i\in[r].$$
\end{cor}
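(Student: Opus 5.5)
The plan is to apply Theorem~\ref{lem:q-stability-r} directly to $\A_1,\dots,\A_r$, which is allowed because $\eta<\eta_{\ref{lem:q-stability-r}}(r)$. We then show that alternative (ii) contradicts the balance hypothesis $|\mu_{p_*}(\A_i)-p_*|\le\frac{1}{4r}$, so alternative (i) must hold. Alternative (i) is exactly the desired conclusion, with the same constant $C_{\ref{lem:q-stability-r}}(r)$. Since the $\A_i$ are already increasing, no up-closure step is needed.

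To rule out (ii), suppose some index $i$ satisfies $\mu_{p_*}(\A_i)\le C_{\ref{lem:q-stability-r}}(r)\eta$. Recall from the proof of Theorem~\ref{lem:q-stability-r} that $C_{\ref{lem:q-stability-r}}(r)\eta_{\ref{lem:q-stability-r}}(r)\le\frac13$; this follows from the first term in the minimum defining $\eta_{\ref{lem:q-stability-r}}(r)$. Hence $\mu_{p_*}(\A_i)\le\frac13$. The balance hypothesis, on the other hand, gives
\[
\mu_{p_*}(\A_i)\ge p_*-\frac1{4r}=\frac{r-1}{r}-\frac{1}{4r}=\frac{4r-5}{4r}.
\]
For $r\ge3$ we have $\frac{4r-5}{4r}\ge\frac{7}{12}>\frac13$, which is a contradiction.

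One could alternatively use the other half of (ii), namely $\mu_{p_*}(\A_j)\ge 1-C_{\ref{lem:q-stability-r}}(r)\eta\ge\frac23$, against the upper bound $p_*+\frac1{4r}$. That route would require $\frac{r-1}{r}+\frac1{4r}<\frac23$, which fails for large $r$, so the lower-bound argument on the small family is the right one.

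The only step needing care is confirming the numerical inequality $C_{\ref{lem:q-stability-r}}(r)\eta\le\frac13$ from the explicit constants. Write $K:=C_{\ref{lem:biased-FKN}}(\tfrac1r)(r-1)^3$. Then
\[
C_{\ref{lem:q-stability-r}}(r)=\frac{K+r(r-2)}{r(r-2)},
\qquad
\eta_{\ref{lem:q-stability-r}}(r)\le\frac{r(r-2)}{3K+3r(r-2)},
\]
and multiplying these gives exactly $\frac13$. Everything else is a direct citation of Theorem~\ref{lem:q-stability-r}.
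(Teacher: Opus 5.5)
Your proposal is correct and matches the paper's proof: apply Theorem~\ref{lem:q-stability-r}, then rule out alternative (ii). It fails because the small family would have measure at most $C_{\ref{lem:q-stability-r}}(r)\eta<\frac13$, while balance forces it to be at least $\frac{4r-5}{4r}\ge\frac{7}{12}$. Your check that $C_{\ref{lem:q-stability-r}}(r)\eta_{\ref{lem:q-stability-r}}(r)\le\frac13$ from the first term of the minimum supplies the numerical detail the paper leaves implicit.
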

\begin{proof}
Applying Theorem~\ref{lem:q-stability-r}, either (i) or (ii) holds. If (i) holds, then we are done. We claim that (ii) is impossible. Indeed, in (ii), after permuting the indices, one has $\mu_{p_*}(\A_1)\le C_{\ref{lem:q-stability-r}}(r)\eta$. On the other hand, the balanced assumption gives $\mu_{p_*}(\A_1)\ge p_* -\frac{1}{4r}=\frac{4r-5}{4r}$. This contradicts the definition of $\eta_{\ref{lem:q-stability-r}}(r)$, since 
$C_{\ref{lem:q-stability-r}}(r)\eta_{\ref{lem:q-stability-r}}(r)<\frac{4r-5}{4r}$.
\end{proof}

\subsection{Stability for $r$-cross-intersecting families}
In this subsection we prove Theorem~\ref{thm:r-cross-product-stability}. The argument mirrors the extremal proof: first we stabilize the AM--GM step at the critical bias, then we identify the near-extremal critical configurations, and finally we transfer the resulting structure back to the original bias.
\begin{proof}[Proof of Theorem~\ref{thm:r-cross-product-stability}]
Set $$\varepsilon_{\ref{thm:r-cross-product-stability}}(r):=\min\left\{\frac{1}{64r^2(r-1)^2},\frac{\eta_{\ref{lem:q-stability-r}}(r)}{2(r-1)},\frac{1}{r^2C_{\ref{lem:q-stability-r}}(r)}\right\}$$ and $$C_{\ref{thm:r-cross-product-stability}}(r):=2+r^2(2r+1)C_{\ref{lem:q-stability-r}}(r).$$
For each $i\in[r]$, let $\G_i$ be the up-closure of $\F_i$. Then $\G_i$ is increasing and $\mu_p(\F_i)\le \mu_p(\G_i)$. Moreover, $\G_1,\dots,\G_r$ are still $r$-cross-intersecting. 

Let $p_*:=\frac{r-1}{r}$ and  $\alpha:=\frac{\log p}{\log p_*}\ge 1$. By Lemma~\ref{lem:comparison}, $\mu_p(\G_i)\le \mu_{p_*}(\G_i)^\alpha$. Therefore
$$(1-\varepsilon)p^r\le\prod_{i=1}^r \mu_p(\F_i) \le \prod_{i=1}^r \mu_p(\G_i)\le \prod_{i=1}^r \mu_{p_*}(\G_i)^\alpha,$$
and hence,
$$\prod_{i=1}^r \mu_{p_*}(\G_i)\ge p_*^r(1-\varepsilon)^{\frac{1}{\alpha}}\ge p_*^r(1-\varepsilon).$$
On the other hand, Lemma~\ref{lem:critical-sum} gives $\sum_{i=1}^r \mu_{p_*}(\G_i)\le r-1$. Applying Lemma~\ref{lem:agm-stability-r}, we get 
$\sum_{i=1}^r (\mu_{p_*}(\G_i)-p_*)^2 \le 4(r-1)^2\varepsilon$.
In particular,
\begin{equation*}
    |\mu_{p_*}(\G_i)-p_*|\le 2(r-1)\sqrt{\varepsilon}\le \frac{1}{4r},\quad \text{ for all } i\in[r],
\end{equation*}
where the last inequality follows from $\varepsilon\le \frac{1}{64r^2(r-1)^2}$. Moreover, by AM--GM,
$$\sum_{i=1}^r \mu_{p_*}(\G_i)\ge r\left(\prod_{i=1}^r \mu_{p_*}(\G_i)\right)^{\frac{1}{r}}\ge r{p_*}(1-\varepsilon)^{\frac{1}{r}}\ge r-1-(r-1)\varepsilon.$$
Since 
$(r-1)\varepsilon<\eta_{\ref{lem:q-stability-r}}(r)$,
Corollary~\ref{cor:q-stability-balanced-r}, applied with $\eta=(r-1)\varepsilon$, gives a coordinate $j\in[n]$ such that
$$\mu_{p_*}(\G_i\Delta \mathcal{S}_j)\le C_{\ref{lem:q-stability-r}}(r)(r-1)\varepsilon,\quad \text{ for all } i\in[r].$$

We next convert this $p_*$-biased closeness into $p$-biased closeness. Relabeling the coordinates if necessary, assume that $\mathcal{S}_j=\mathcal{S}_n=\{A\subseteq[n]:n\in A\}$. For each $i\in[r]$, define $\G_i^0:=\{A\subseteq[n-1]:A\in\G_i\}$ and $\G_i^1:=\{A\subseteq[n-1]:A\cup\{n\}\in\G_i\}$. Since $\G_i$ is increasing, $\G_i^0\subseteq \G_i^1$. Moreover,
$$
\mu_{p_*}(\G_i\Delta \mathcal{S}_n)
=(1-p_*)\mu_{p_*}(\G_i^0)+{p_*}\bigl(1-\mu_{p_*}(\G_i^1)\bigr)
\le C_{\ref{lem:q-stability-r}}(r)(r-1)\varepsilon.$$
In particular,
$$\mu_{p_*}(\G_i^0)\le C_{\ref{lem:q-stability-r}}(r)r(r-1)\varepsilon.$$
Since $\varepsilon\le\varepsilon_{\ref{thm:r-cross-product-stability}}\le \frac{1}{r^2C_{\ref{lem:q-stability-r}}(r) }$, we have $C_{\ref{lem:q-stability-r}}(r)r(r-1)\varepsilon\le p_*$. The slice $\G_i^0$ is increasing, so by Lemma~\ref{lem:comparison}, and using $p=p_*^\alpha$,
\begin{equation}\label{ineq:upper-bound-p-0}
\mu_p(\G_i^0)
\le\mu_{p_*}(\G_i^0)^\alpha=
p\left(\frac{\mu_{p_*}(\G_i^0)}{p_*}\right)^\alpha
\le
p\left(\frac{C_{\ref{lem:q-stability-r}}(r)r(r-1)\varepsilon}{p_*}\right)
\le
pr^2C_{\ref{lem:q-stability-r}}(r)\varepsilon.
\end{equation}
Here we used \(\mu_{p_*}(\G_i^0)\le p_*\) and \(\alpha\ge1\).
Hence,
\begin{equation}\label{ineq:upper-bound-p-biased}
    \mu_p(\mathcal G_i)=(1-p)\mu_p(\mathcal G_i^0)+p\mu_p(\mathcal G_i^1)\le p(1+r^2C_{\ref{lem:q-stability-r}}(r)\varepsilon),\quad \text{ for all }i\in [r].
\end{equation}
Combining \eqref{ineq:upper-bound-p-biased} with $\prod_{i=1}^r \mu_p(\F_i)
\ge(1-\varepsilon)p^r$ gives, for every $i\in[r]$,
\begin{equation}\label{ineq:lower-bound-p-baised}
    \mu_p(\G_i)\ge \mu_p(\F_i)\ge \frac{(1-\varepsilon)p^r}{\left(p(1+r^2C_{\ref{lem:q-stability-r}}(r)\varepsilon)\right)^{r-1}}\ge p\left(1-\left(1+r^2(r-1)C_{\ref{lem:q-stability-r}}(r)\right)\varepsilon\right).
\end{equation}
The last inequality uses \((1+x)^{-(r-1)}\ge 1-(r-1)x\) for \(x\ge0\).
Using \eqref{ineq:upper-bound-p-0} and \eqref{ineq:lower-bound-p-baised}, we obtain
\[
\begin{split}
p\bigl(1-\mu_p(\G_i^1)\bigr)
&=
p+(1-p)\mu_p(\G_i^0)-\mu_p(\G_i)\\
&\le
p\left(1+r^2(r-1)C_{\ref{lem:q-stability-r}}(r)\right)\varepsilon
+
(1-p)\mu_p(\G_i^0)\\
&\le
p\left(1+r^3C_{\ref{lem:q-stability-r}}(r)\right)\varepsilon.
\end{split}
\]
Therefore,
$$\mu_p(\G_i\Delta \mathcal{S}_n)=(1-p)\mu_p(\G_i^0)+p\bigl(1-\mu_p(\G_i^1)\bigr)\le p\left(1+r^2(r+1)C_{\ref{lem:q-stability-r}}(r)\right)\varepsilon.$$

It remains to pass from the up-closures $\G_i$ back to the original families
$\F_i$. By \eqref{ineq:upper-bound-p-biased} and \eqref{ineq:lower-bound-p-baised},
\[
\mu_p(\G_i\setminus\F_i)
=\mu_p(\G_i)-\mu_p(\F_i)\le
p\left(1+r^3C_{\ref{lem:q-stability-r}}(r)\right)\varepsilon.
\]
Finally, for all $i\in[r]$,
\[
\mu_p(\F_i\Delta \mathcal{S}_n)
\le
\mu_p(\G_i\Delta \mathcal{S}_n)+\mu_p(\G_i\setminus\F_i)\le
p\left(2+r^2(2r+1)C_{\ref{lem:q-stability-r}}(r)\right)\varepsilon=
p C_{\ref{thm:r-cross-product-stability}}(r)\varepsilon.
\]
This proves the theorem.
\end{proof}

\section{Concluding Remarks}\label{sec:concluding}

\subsection{A biased-to-uniform consequence}

We record a short consequence of our biased product theorem in the uniform setting.  The
corresponding uniform product bound is already known by the theorem of Frankl and
Tokushige~\cite{FT2011}; the point here is only that the asymptotic form, together with a stability statement, follows directly from Theorems~\ref{thm:main} and
\ref{thm:r-cross-product-stability}.  The argument is the standard biased-to-uniform passage:
take up-closures and use the local LYM inequality at a parameter slightly larger than \(k/n\). For \(j\in[n]\), write
    $\mathcal S_j^{(k)}:=\left\{A\in\binom{[n]}{k}:j\in A\right\}.$

\begin{cor}[Asymptotic uniform product bound]\label{cor:uniform-product}
Let \(r\ge2\), and let \(k=k(n)\) satisfy
    $\frac{k}{n}\to \alpha$ for some $0<\alpha<\frac{r-1}{r}.$
If
$    \A_1,\ldots,\A_r\subseteq\binom{[n]}{k}$
are \(r\)-cross-intersecting, then
\[
    \limsup_{n\to\infty}
    \prod_{i=1}^r\frac{|\A_i|}{\binom nk}
    \le
    \alpha^r.
\]
\end{cor}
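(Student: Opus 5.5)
The plan is the standard biased-to-uniform passage: replace each $\A_i$ by its up-closure, lower-bound the $p$-biased measure of that up-closure by the normalized size of $\A_i$ via local LYM, and then apply Theorem~\ref{thm:main}.

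Fix $\beta$ with $\alpha<\beta<\frac{r-1}{r}$, and let $p=\beta$. Let $\G_i:=\A_i^{\uparrow}\subseteq 2^{[n]}$. The families $\G_1,\dots,\G_r$ are still $r$-cross-intersecting, so Theorem~\ref{thm:main} gives $\prod_i\mu_\beta(\G_i)\le\beta^r$.

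To compare $|\A_i|/\binom nk$ with $\mu_\beta(\G_i)$, we use the local LYM inequality. For every $m\ge k$, the $m$-th layer $\G_i^{(m)}$ contains the upper shadow of $\A_i$ at level $m$. Hence
\[
\frac{|\G_i^{(m)}|}{\binom nm}\ge\frac{|\A_i|}{\binom nk}.
\]
Summing over the layers then gives
\[
\mu_\beta(\G_i)\ge\sum_{m\ge k}\binom nm\beta^m(1-\beta)^{n-m}\frac{|\A_i|}{\binom nk}=\frac{|\A_i|}{\binom nk}\,\PP\bigl(\mathrm{Bin}(n,\beta)\ge k\bigr).
\]
Since $k/n\to\alpha<\beta$, the law of large numbers (or a Chernoff bound) gives $\PP(\mathrm{Bin}(n,\beta)\ge k)\to1$.

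Multiplying over $i$, we obtain
\[
\prod_i\frac{|\A_i|}{\binom nk}\le\frac{\beta^r}{\PP(\mathrm{Bin}(n,\beta)\ge k)^r}.
\]
Taking $\limsup_{n\to\infty}$ yields $\limsup\prod_i|\A_i|/\binom nk\le\beta^r$. Letting $\beta\downarrow\alpha$ then gives the bound $\alpha^r$. If some $\A_i$ is empty the claim is trivial, so we may assume all are nonempty.

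The only point needing care is the local LYM step: the normalized density of the upper shadow is nondecreasing in the level, which is exactly the standard local LYM (Kruskal–Katona-free) inequality. The rest is routine, and the requirement $\alpha<\frac{r-1}{r}$ is what leaves room to choose $\beta$ strictly between $\alpha$ and $\frac{r-1}{r}$, so that Theorem~\ref{thm:main} applies at bias $\beta$.
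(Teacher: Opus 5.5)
Your proposal is correct and follows essentially the same route as the paper's proof. Both arguments fix a bias strictly between $\alpha$ and $\frac{r-1}{r}$, lower-bound $\mu_\beta(\A_i^{\uparrow})$ by $\frac{|\A_i|}{\binom nk}\,\PP(\mathrm{Bin}(n,\beta)\ge k)$ via local LYM, apply Theorem~\ref{thm:main}, let the binomial tail tend to $1$, and finally send $\beta\downarrow\alpha$.
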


\begin{proof}
For each \(i\in[r]\), let \(\G_i:=\A_i^\uparrow\).  Then
\(\G_1,\ldots,\G_r\) are \(r\)-cross-intersecting.  Fix
    $p\in\left(\alpha,\frac{r-1}{r}\right).$
For all sufficiently large \(n\), we have \(k/n<p\).  Put
   $ T_n:=\mathbb P(\operatorname{Bin}(n,p)\ge k).$
By the local LYM inequality, for every \(\ell\ge k\),
$ \frac{|\G_i\cap\binom{[n]}{\ell}|}{\binom n\ell}
    \ge
    \frac{|\A_i|}{\binom nk}.$
Therefore, conditioning on the size of a \(p\)-random set,
\[
    \mu_p(\G_i)
    =
    \sum_{\ell=0}^n
    \mathbb P(\operatorname{Bin}(n,p)=\ell)\cdot
    \frac{|\G_i\cap\binom{[n]}{\ell}|}{\binom n\ell}\ge
    T_n\cdot \frac{|\A_i|}{\binom nk}.
\]

Applying Theorem~\ref{thm:main} to \(\G_1,\ldots,\G_r\), we get
$    T_n^r\prod_{i=1}^r\frac{|\A_i|}{\binom nk}
    \le
    \prod_{i=1}^r\mu_p(\G_i)
    \le
    p^r.$
Since \(\frac{k}{n}\to\alpha<p\), the law of large numbers gives \(T_n\to1\).  Thus
    $\limsup_{n\to\infty}
    \prod_{i=1}^r\frac{|\A_i|}{\binom nk}
    \le p^r.$
Letting \(p\downarrow\alpha\) completes the proof.
\end{proof}

\begin{cor}[Uniform stability]\label{cor:uniform-stability}
Let \(r\ge3\), and let \(0<a<b<\frac{r-1}{r}\).  Let \(k=k(n)\) satisfy
    $a\le \frac{k}{n}\le b.$
Let
$    \A_1,\ldots,\A_r\subseteq\binom{[n]}{k}$
be \(r\)-cross-intersecting. If
$    \prod_{i=1}^r \frac{|\A_i|}{\binom nk}
    \ge
    (1-o(1))\left(\frac{k}{n}\right)^r,$
then there exists \(j\in[n]\) such that, for every \(i\in[r]\),
\[
    \frac{|\A_i\Delta\mathcal S_j^{(k)}|}{\binom{n-1}{k-1}}=o(1).
\]
\end{cor}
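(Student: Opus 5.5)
The plan is to follow the biased-to-uniform passage used in Corollary~\ref{cor:uniform-product}, but to apply Theorem~\ref{thm:r-cross-product-stability} instead of Theorem~\ref{thm:main}, and then transfer the resulting $p$-biased closeness back to level $k$.

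\textbf{Step 1: pass to the biased setting.} Write $\alpha_n:=k/n\in[a,b]$. Fix a small $\gamma>0$ and set $p:=\alpha_n+\gamma\le\frac{r-1}{r}$; this is possible for $\gamma<\frac{r-1}{r}-b$. Put $\G_i:=\A_i^\uparrow$. These families are still $r$-cross-intersecting.

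\textbf{Step 2: a near-extremal product at bias $p$.} The local LYM argument gives $\mu_p(\G_i)\ge T_n\,|\A_i|/\binom nk$ with $T_n=\mathbb P(\operatorname{Bin}(n,p)\ge k)$. By Chernoff, $T_n\ge1-e^{-c\gamma^2 n}$, uniformly in $k$. Hence
\[
\prod_i\mu_p(\G_i)\ge(1-o(1))(\alpha_n/p)^r\,p^r\ge(1-\varepsilon)p^r,
\qquad \varepsilon:=O_r(\gamma/a)+o(1).
\]
Choosing $\gamma$ small and then $n$ large makes $\varepsilon\le\varepsilon_{\ref{thm:r-cross-product-stability}}(r)$.

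\textbf{Step 3: apply biased stability.} Theorem~\ref{thm:r-cross-product-stability} gives a $j$ with $\mu_p(\G_i\Delta\mathcal S_j)\le Cp\varepsilon$ for every $i$. In slice form, with $\G_i^0,\G_i^1$ the sections at $j$, this reads
\[
(1-p)\mu_p(\G_i^0)+p\bigl(1-\mu_p(\G_i^1)\bigr)=O(p\varepsilon).
\]

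\textbf{Step 4: return to level $k$.} Here we must bound $|\A_i\setminus\mathcal S_j^{(k)}|$ and $|\mathcal S_j^{(k)}\setminus\A_i|$ relative to $\binom{n-1}{k-1}$.

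For the sets avoiding $j$: $\A_i^{0}:=\{A\in\A_i:j\notin A\}\subseteq\binom{[n]\setminus\{j\}}{k}$ generates an up-set inside $\G_i^0$. Local LYM on $2^{[n]\setminus\{j\}}$ with $\mathbb P(\operatorname{Bin}(n-1,p)\ge k)\to1$ gives $|\A_i^0|/\binom{n-1}{k}\le(1+o(1))\mu_p(\G_i^0)=O(\varepsilon)$. Since $\binom{n-1}{k}/\binom{n-1}{k-1}=(n-k)/k\le 1/a$, this part is $O(\varepsilon/a)$.

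For the star part, I would not transfer the lower bound on $\mu_p(\G_i^1)$ (upper LYM goes the wrong way). Instead, use counting directly. The hypothesis gives $\prod_i|\A_i|/\binom nk\ge(1-o(1))\alpha_n^r$. Corollary~\ref{cor:uniform-product}, or more precisely its proof applied uniformly with $p=\alpha_n+\gamma$, gives $|\A_{i'}|/\binom nk\le(\alpha_n+\gamma)(1+o(1))$-type upper bounds on the other factors; the AM--GM step bounds each factor individually once the others are near their maximum. Rather than rely on that, I would argue as follows. Each $|\A_i|\le|\A_i\cap\mathcal S_j^{(k)}|+|\A_i^0|\le\binom{n-1}{k-1}+O(\varepsilon)\binom{n-1}{k-1}/a$. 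The product of these upper bounds is $\alpha_n^r(1+O(\varepsilon/a))^r$, which together with the product lower bound forces each $|\A_i|\ge(1-O(\varepsilon/a)-o(1))\binom{n-1}{k-1}$. Therefore $|\mathcal S_j^{(k)}\setminus\A_i|\le|\mathcal S_j^{(k)}|-|\A_i|+|\A_i^0|=O(\varepsilon/a+o(1))\binom{n-1}{k-1}$.

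\textbf{Step 5: conclude.} Letting $\gamma\to0$ slowly as $n\to\infty$ gives the $o(1)$ bound.

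\textbf{Main obstacle.} The delicate point is uniformity. The star index $j$ may depend on $\gamma$, and $\varepsilon$ must tend to $0$. I would handle this by a diagonal choice $\gamma=\gamma_n\to0$ with $\gamma_n^2n\to\infty$, so that $T_n\to1$ while $\varepsilon_n\to0$. The theorem is applied afresh for each $n$, so the dependence of $j$ on $n$ is harmless. The constants depend only on $r$ and $a$, which keeps everything uniform over $k/n\in[a,b]$.
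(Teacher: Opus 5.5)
Your proposal is correct and follows essentially the same route as the paper. You pass to the up-closures at bias $p_n=\alpha_n+\gamma_n$ (the paper takes $\gamma_n=n^{-1/3}$) and apply Theorem~\ref{thm:r-cross-product-stability}; you then bound the part of $\A_i$ avoiding $j$ by local LYM on $[n]\setminus\{j\}$, and recover the missing star part from the resulting upper bound $|\A_i|/\binom nk\le\alpha_n+o(1)$ combined with the product lower bound, exactly as the paper does.
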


\begin{proof}
Write $\alpha_n:=\frac{k}{n}$, $p_n:=\alpha_n+n^{-1/3}$ and $d_i:=\frac{|\A_i|}{\binom nk}.$
For all sufficiently large \(n\), we have \(p_n\le (r-1)/r\).  Let
\(\G_i:=\A_i^\uparrow\).  As in the proof of Corollary~\ref{cor:uniform-product},
$    \mu_{p_n}(\G_i)\ge d_iT_n,$ where
    $T_n:=\mathbb P(\operatorname{Bin}(n,p_n)\ge k).$
Since \(p_n-\alpha_n=n^{-1/3}\), Chernoff's inequality gives \(T_n\to1\).  Hence
\[
    \prod_{i=1}^r\mu_{p_n}(\G_i)
    \ge
    T_n^r\prod_{i=1}^r d_i
    \ge
    (1-o(1))\alpha_n^r
    =
    (1-o(1))p_n^r.
\]
By Theorem~\ref{thm:r-cross-product-stability}, applied at bias \(p_n\), there exists
a \(1\)-star \(\mathcal S_j\subseteq2^{[n]}\) such that, for every \(i\in[r]\),
$   \mu_{p_n}(\G_i\Delta\mathcal S_j)=o(1).$

It remains to return to the \(k\)-th level.  Fix \(i\in[r]\), and set
$\B_i:=\A_i\setminus\mathcal S_j^{(k)}$ and    $b_i:=\frac{|\B_i|}{\binom nk}.$
View \(\B_i\) as a family of \(k\)-subsets of \([n]\setminus\{j\}\), and let
\(\B_i^\uparrow\) be its up-closure inside \([n]\setminus\{j\}\).  Then
\[
    \{X\subseteq[n]:j\notin X,\ X\in\B_i^\uparrow\}
    \subseteq
    \G_i\setminus\mathcal S_j.
\]
By local LYM on \([n]\setminus\{j\}\),
\[
\begin{split}
    \mu_{p_n}(\G_i\setminus\mathcal S_j)
    &\ge
    (1-p_n)
    \frac{|\B_i|}{\binom{n-1}{k}}
    \mathbb P(\operatorname{Bin}(n-1,p_n)\ge k) \\
    &=
    (1-p_n)
    \frac{b_i}{1-\alpha_n}
    \mathbb P(\operatorname{Bin}(n-1,p_n)\ge k).
\end{split}
\]
The binomial tail on the right tends to \(1\), while \(1-p_n\) and \(1-\alpha_n\) are bounded
away from \(0\).  Since
$\mu_{p_n}(\G_i\setminus\mathcal S_j)
    \le
    \mu_{p_n}(\G_i\Delta\mathcal S_j)
    =
    o(1),$
we get
    $b_i=o(1).$
Thus    $d_i\le \alpha_n+o(1)$
    for every $i\in[r].$
The product lower bound then gives, for every \(i\in[r]\),
$    d_i
    \ge
    \frac{(1-o(1))\alpha_n^r}{(\alpha_n+o(1))^{r-1}}
    =
    (1-o(1))\alpha_n,$
using \(\alpha_n\ge a>0\).  Finally, if
$    m_i:=
    \frac{|\mathcal S_j^{(k)}\setminus\A_i|}{\binom nk},$
then
   $ d_i=\alpha_n-m_i+b_i.$
Since \(d_i=(1-o(1))\alpha_n\) and \(b_i=o(1)\), we have \(m_i=o(1)\).  Hence
\[
    \frac{|\A_i\Delta\mathcal S_j^{(k)}|}{\binom nk}
    =
    b_i+m_i
    =
    o(1).
\]
Since $\binom{n-1}{k-1}=\alpha_n\binom nk$ and $\alpha_n\ge a$, the same conclusion holds with normalization by $\binom{n-1}{k-1}$.
\end{proof}

\subsection{A partial unequal-bias consequence}

We next record a simple consequence in the direction of the full Frankl--Tokushige
conjecture.  The argument gives a nontrivial unequal-bias range when the logarithmic biases
are sufficiently balanced.

\begin{cor}[A log-balanced unequal-bias range]\label{cor:log-balanced-unequal}
Let $r\ge2$, and let $0<p_i<1$ for $i\in[r]$. Let $q_0:=\max_{i\in[r]}p_i$, $L_i:=-\log p_i$ and     $L:=\sum_{i=1}^r L_i$. Suppose \(s:=\left\lceil\frac{1}{1-q_0}\right\rceil\le r\) and $
    \max_{i\in[r]}L_i\le \frac{L}{s}.$
Then every \(r\)-cross-intersecting tuple
$    \F_1,\ldots,\F_r\subseteq2^{[n]}$
satisfies
\[
    \prod_{i=1}^r\mu_{p_i}(\F_i)
    \le
    \prod_{i=1}^r p_i.
\]
\end{cor}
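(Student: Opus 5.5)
We plan to reduce to the critical-sum mechanism of Lemma~\ref{lem:critical-sum}, applied to $s$ of the families at a time, and then recombine the pieces using the log-balance hypothesis. First dispose of trivialities. If some $\F_i$ is empty, the product is $0$. Otherwise replace every $\F_i$ by its up-closure $\G_i:=\F_i^\uparrow$. The tuple $\G_1,\dots,\G_r$ is still $r$-cross-intersecting, and each $\G_i$ is a nonempty increasing family, so $[n]\in\G_i$. Set $q:=\frac{s-1}{s}$. Since $q_0>0$ we have $s\ge2$, so $q>0$. Since $s\ge \frac{1}{1-q_0}$, we also have $p_i\le q_0\le q<1$ for every $i$. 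Writing $\ell:=-\log q>0$ and $x_i:=\mu_q(\G_i)$, Lemma~\ref{lem:comparison} gives $\mu_{p_i}(\G_i)\le x_i^{L_i/\ell}$. Hence
\[
\log\prod_{i=1}^r\mu_{p_i}(\F_i)\le \frac{1}{\ell}\sum_{i=1}^r L_i\log x_i ,
\]
and it suffices to show $\sum_i w_i\log x_i\le s\log q$, where $w_i:=sL_i/L$.

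The key local estimate is a $T$-restricted critical-sum bound: for every $s$-subset $T\subseteq[r]$ we claim $\sum_{i\in T}x_i\le s-1$, and hence by AM--GM $\sum_{i\in T}\log x_i\le s\log q$. To prove it we adapt the coupling of Lemma~\ref{lem:critical-sum}. For each coordinate $a$, choose $R_a\in T$ uniformly and independently. For $i\in T$ put $a\in X_i$ iff $R_a\ne i$, and for $j\notin T$ set $X_j:=[n]$. Each $X_i$ with $i\in T$ is then $\mu_q$-distributed, $X_j\in\G_j$ deterministically for $j\notin T$, and $\bigcap_{i=1}^r X_i=\varnothing$. By $r$-cross-intersection, at most $s-1$ of the events $\{X_i\in\G_i\}$, $i\in T$, occur, and taking expectations gives the claim.

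It remains to average these local estimates, and this is where the log-balance hypothesis enters; it is the main step to get right. The vector $w=(w_1,\dots,w_r)$ satisfies $0\le w_i\le 1$, because $\max_i L_i\le L/s$, and $\sum_i w_i=s$. So $w$ lies in the hypersimplex $\{w\in[0,1]^r:\sum_i w_i=s\}$, whose vertices are the indicator vectors $\mathbbm{1}_T$ of $s$-subsets $T$ (this is a standard fact, also derivable from integrality of the corresponding bipartite/flow polytope). Thus $w=\sum_T\lambda_T\mathbbm{1}_T$ with $\lambda_T\ge0$ and $\sum_T\lambda_T=1$. Then
\[
\sum_i w_i\log x_i=\sum_T\lambda_T\sum_{i\in T}\log x_i\le s\log q ,
\]
which gives
\[
\prod_i\mu_{p_i}(\F_i)\le \exp\Bigl(\tfrac{s\log q}{\ell}\cdot\tfrac{L}{s}\Bigr)=e^{-L}=\prod_i p_i .
\]
One care point: if some $x_i=0$, the bound is immediate, since then $\mu_{p_i}(\G_i)=0$.
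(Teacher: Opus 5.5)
Your proof is correct and follows the paper's argument: you bound products over $s$-subsets of indices, turn these into linear constraints in logarithmic coordinates, and average them with the same hypersimplex decomposition of $w$. The only difference is that the paper applies Theorem~\ref{thm:main} at bias $q_0$ to each $s$-subcollection (which is $s$-cross-intersecting because the other families are nonempty) and then compares from $q_0$ down to each $p_i$. You instead inline that theorem's proof at the critical bias $(s-1)/s$, padding the coupling with $X_j=[n]$ for $j\notin T$, and compare directly from $(s-1)/s$ to each $p_i$.
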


\begin{proof}
If some \(\F_i\) is empty, the result is immediate.  We may therefore assume that all
\(\F_i\)'s are non-empty.  As $s\le r$, every \(s\)-subcollection is \(s\)-cross-intersecting. For each \(i\in[r]\), let \(\G_i:=\F_i^\uparrow\).  Since
$q_0\le \frac{s-1}{s},$
Theorem~\ref{thm:main}, applied to every \(s\)-subcollection, gives
    $\prod_{i\in E}\mu_{q_0}(\G_i)\le q_0^s$
    for every $E\in\binom{[r]}s.$
If \(\mu_{q_0}(\G_i)=0\) for some \(i\), then \(\mu_{p_i}(\F_i)=0\), and there is nothing to
prove.  Thus assume \(\mu_{q_0}(\G_i)>0\) for all \(i\).  Define $a_i:=\frac{\log\mu_{q_0}(\G_i)}{\log q_0}.$
Since \(0<q_0<1\), the preceding inequalities are equivalent to
$\sum_{i\in E}a_i\ge s$
for every $E\in\binom{[r]}s.$

Next write
    $p_i=q_0^{\alpha_i}$ and $
    \alpha_i:=\frac{\log p_i}{\log q_0}.$
By Lemma~\ref{lem:comparison},
$\mu_{p_i}(\F_i)
    \le
    \mu_{p_i}(\G_i)
    \le
    \mu_{q_0}(\G_i)^{\alpha_i}
    =
    q_0^{\alpha_i a_i}.$
Thus it is enough to prove
$    \sum_{i=1}^r\alpha_i a_i\ge \sum_{i=1}^r\alpha_i.$
Let \(A:=\sum_i\alpha_i\).  The logarithmic balance assumption is exactly
    $\max_i\alpha_i\le \frac{A}{s}.$
Set
$    w_i:=\frac{s\alpha_i}{A}.$
Then \(0\le w_i\le1\) and \(\sum_iw_i=s\).  Hence
\((w_1,\ldots,w_r)\) lies in the hypersimplex
    $\left\{w\in[0,1]^r:\sum_iw_i=s\right\},$
which is the convex hull of the incidence vectors of the \(s\)-subsets of \([r]\).  Therefore
there are non-negative coefficients \(\lambda_E\), \(E\in\binom{[r]}s\), with
\(\sum_E\lambda_E=1\), such that
    $w=\sum_{E\in\binom{[r]}s}\lambda_E\mathbbm{1}_E.$
It follows that
\[
    \sum_{i=1}^r\alpha_i a_i
    =
    \frac{A}{s}\sum_{i=1}^r w_i a_i  =
    \frac{A}{s}
    \sum_{E\in\binom{[r]}s}\lambda_E\sum_{i\in E}a_i  \ge
    \frac{A}{s}
    \sum_{E\in\binom{[r]}s}\lambda_E s
    =
    A.
\]
Consequently,
$\prod_{i=1}^r\mu_{p_i}(\F_i)
    \le
    q_0^{\sum_i\alpha_i a_i}
    \le
    q_0^{\sum_i\alpha_i}
    =
    \prod_{i=1}^r p_i.$
\end{proof}

The condition in Corollary~\ref{cor:log-balanced-unequal} is genuinely unequal-bias when
\(q_0\) is bounded away from the critical value \((r-1)/r\).  For instance, if \(q_0\le1/2\),
then \(s=2\), and the condition becomes
    $\max_i(-\log p_i)
    \le
    \frac12\sum_{j=1}^r(-\log p_j).$
However, in the top range
    $\frac{r-2}{r-1}<q_0\le\frac{r-1}{r},$
we have \(s=r\), and the balance condition forces
    $p_1=\cdots=p_r.$
Thus this corollary does not reach the hardest part of the Frankl--Tokushige conjecture; a
new unequal-bias ingredient seems necessary there.

\subsection{Index-hypergraph product bound}

We can use Theorem~\ref{thm:main} to prove a general index-hypergraph formulation of the product bound.  
The hypergraph records which subcollections are required to be cross-intersecting, and the exponent in the upper bound is determined by the corresponding fractional cover linear program.
\begin{theorem}[Index-hypergraph product bound]\label{thm:index-hypergraph-product}
Let $\mathcal H$ be a non-empty hypergraph on vertex set $[r]$, all of whose edges have size at least $2$.  Let $\F_1,\dots,\F_r\subseteq2^{[n]}$, and suppose that
$0\le p\le \min_{E\in\mathcal H}\frac{|E|-1}{|E|}.$
Suppose that for every edge $E\in\mathcal H$, the subcollection $\{\F_i:i\in E\}$ is $|E|$-cross-intersecting.  Define
\[
\tau(\mathcal H):=\min\left\{\sum_{i=1}^r a_i:
        a_i\ge0\text{ for all }i,
        \ \sum_{i\in E}a_i\ge |E|\text{ for every }E\in\mathcal H
        \right\}.
\]
Then
\[
\prod_{i=1}^r \mu_p(\F_i)\le p^{\tau(\mathcal H)}.
\]
\end{theorem}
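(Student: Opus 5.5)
We follow the proof of Corollary~\ref{cor:log-balanced-unequal}, with the hypergraph $\mathcal H$ replacing the complete $s$-uniform hypergraph and LP duality replacing the hypersimplex decomposition.

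\textbf{Degenerate cases.} If $p=0$, or some $\F_i$ is empty, or $\mu_p(\F_i^\uparrow)=0$ for some $i$, the product is $0$ and there is nothing to prove. The only care needed is the exponent: $\tau(\mathcal H)$ is finite and positive because $\mathcal H$ is nonempty, so $p^{\tau(\mathcal H)}$ is well defined, with $0^{\tau}=0$.

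\textbf{Setup for $0<p<1$.} Replace each $\F_i$ by its up-closure $\G_i:=\F_i^\uparrow$. This preserves $|E|$-cross-intersection of each subcollection indexed by an edge, and it does not decrease $\mu_p$. For each $E\in\mathcal H$ we have $p\le\frac{|E|-1}{|E|}$, so Theorem~\ref{thm:main} applied to $\{\G_i:i\in E\}$ gives
\[
\prod_{i\in E}\mu_p(\G_i)\le p^{|E|}.
\]
Define $a_i:=\log\mu_p(\G_i)/\log p\ge 0$. Since $\log p<0$, the edge inequality is equivalent to $\sum_{i\in E}a_i\ge|E|$. Thus $(a_i)$ is feasible for the linear program defining $\tau(\mathcal H)$, and so $\sum_i a_i\ge\tau(\mathcal H)$. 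Then
\[
\prod_i\mu_p(\F_i)\le\prod_i\mu_p(\G_i)=p^{\sum_i a_i}\le p^{\tau(\mathcal H)},
\]
again using $0<p<1$. The case $p=1$ cannot occur, because every edge has size at least $2$, which forces $p\le\frac{|E|-1}{|E|}<1$.

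\textbf{Main obstacle.} Nothing here seems substantial. The one point to verify is that the minimum in $\tau(\mathcal H)$ is attained and that feasibility of the particular vector $(a_i)$ really gives $\sum_i a_i\ge\tau$; both hold because $\tau$ is the minimum over the whole feasible polyhedron.

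Unlike Corollary~\ref{cor:log-balanced-unequal}, no comparison lemma is needed, since all biases are equal. Also, no convex decomposition is needed: LP feasibility already does the work. If one wanted a dual form, the bound $\sum_i a_i\ge\tau$ is equivalent to the dual program over fractional edge weights $y_E\ge0$ with $\sum_{E\ni i}y_E\le 1$. One could then write $\sum_i a_i\ge\sum_E y_E\sum_{i\in E}a_i\ge\sum_E y_E|E|=\tau$, which mirrors the hypersimplex step of the corollary.
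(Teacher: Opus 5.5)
Your proof is correct and is essentially the paper's argument: apply Theorem~\ref{thm:main} to each edge subcollection, set $a_i=\log\mu_p(\cdot)/\log p$, and use LP feasibility to get $\sum_i a_i\ge\tau(\mathcal H)$. Passing to up-closures is harmless but unnecessary, since Theorem~\ref{thm:main} holds for arbitrary families. In the case $p=0$ you should add one line justifying that the product vanishes: the families indexed by any edge cannot all contain $\varnothing$, or equivalently, apply Theorem~\ref{thm:main} at $p=0$.
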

\begin{proof}
If $p=0$, then, for any edge $E\in\mathcal H$, the families $\{\F_i:i\in E\}$ are $|E|$-cross-intersecting, so they cannot all contain $\varnothing$. Hence at least one family has $\mu_0$-measure $0$, and the product is $0$. Since $\mathcal H$ is non-empty, we have $\tau(\mathcal H)>0$, and therefore $p^{\tau(\mathcal H)}=0$.

Now assume $0<p<1$. If $\mu_p(\F_i)=0$ for some $i$, the conclusion is immediate. Otherwise define   $a_i:=\frac{\log\mu_p(\F_i)}{\log p},$ so that $
    \mu_p(\F_i)=p^{a_i}.$
Since $0<\mu_p(\F_i)\le1$ and $0<p<1$, we have $a_i\ge0$. For every edge $E\in\mathcal H$, Theorem~\ref{thm:main}, applied to the $|E|$-cross-intersecting subcollection $\{\F_i:i\in E\}$, gives
    $\prod_{i\in E}\mu_p(\F_i)\le p^{|E|}.$
Equivalently, $\sum_{i\in E}a_i\ge |E|$. Thus $(a_1,\dots,a_r)$ is feasible in the linear program defining $\tau(\mathcal H)$, and hence $\sum_i a_i\ge\tau(\mathcal H)$. Therefore
$    \prod_{i=1}^r\mu_p(\F_i)=p^{\sum_i a_i}\le p^{\tau(\mathcal H)},$
as required.
\end{proof}

There is a fractional matching interpretation.
By linear programming duality,
\[
\tau(\mathcal H)=\max\left\{\sum_{E\in\mathcal H}|E|y_E:
        y_E\ge0\text{ for all }E,
        \ \sum_{E\ni i}y_E\le1\text{ for every }i\in[r]
        \right\}.
\]
In particular, if $\mathcal H$ is $q$-uniform, then $\tau(\mathcal H)=q\nu^*(\mathcal H)$, where $\nu^*(\mathcal H)$ is the fractional matching number of $\mathcal H$.

\begin{cor}\label{cor:index-hypergraph-alpha}
Let $\mathcal H$ be a non-empty $q$-uniform hypergraph on $[r]$ with independence number $\alpha(\mathcal H)$, where $q\ge2$. Suppose that $0\le p\le (q-1)/q$ and that, for every $E\in\mathcal H$, the families $\{\F_i:i\in E\}$ are $q$-cross-intersecting. Then
\[
    \prod_{i=1}^r\mu_p(\F_i)\le p^{r-\alpha(\mathcal H)+q-1}.
\]
\end{cor}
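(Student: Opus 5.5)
The plan is to bound the fractional cover number $\tau(\mathcal H)$ from below. By Theorem~\ref{thm:index-hypergraph-product} with $|E|=q$ for every edge, it suffices to show $\tau(\mathcal H)\ge r-\alpha(\mathcal H)+q-1$. The hypothesis $p\le (q-1)/q$ is exactly what that theorem needs. Equivalently, I will show directly that every $a\in\mathbb R_{\ge0}^r$ with $\sum_{i\in E}a_i\ge q$ for all $E\in\mathcal H$ satisfies $\sum_i a_i\ge r-\alpha(\mathcal H)+q-1$. For the families themselves this is applied with $a_i=\log\mu_p(\F_i)/\log p$, as in the proof of that theorem. The degenerate cases $p=0$, or some $\mu_p(\F_i)=0$, are handled exactly as there.

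Write $\alpha:=\alpha(\mathcal H)$. Since $\mathcal H$ is non-empty, $[r]$ is not independent, so $\alpha\le r-1$. Order the indices so that $a_1\le a_2\le\cdots\le a_r$, and let $W:=\{1,\dots,\alpha+1\}$. Since $|W|>\alpha$, $W$ is not independent, so it contains some edge $E\in\mathcal H$ with $|E|=q$ and $\sum_{i\in E}a_i\ge q$.

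Two consequences follow from this edge. First, some $i\in E$ has $a_i\ge 1$, since the average of $a$ over $E$ is at least $1$. By the ordering, $a_{\alpha+1}\ge \max_{i\in W}a_i\ge 1$, and hence $a_j\ge1$ for every $j\notin W$; there are $r-\alpha-1$ such indices. Second, using $a_i\ge0$ on $W\setminus E$,
\[
\sum_{i=1}^r a_i\ \ge\ \sum_{i\in E}a_i+\sum_{j\notin W}a_j\ \ge\ q+(r-\alpha-1),
\]
which is the required bound. Then $\prod_i\mu_p(\F_i)=p^{\sum_i a_i}\le p^{r-\alpha+q-1}$, because $0<p<1$.

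There is no serious obstacle. The only idea needed is to choose $W$ as the $\alpha+1$ indices with smallest $a_i$, so that the edge forced inside $W$ simultaneously supplies the sum $q$ and certifies $a_j\ge1$ for all $j$ outside $W$. Sharpness is not required.
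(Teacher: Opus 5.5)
Your proposal is correct and follows the paper's proof: both reduce to $\tau(\mathcal H)\ge r-\alpha(\mathcal H)+q-1$ via Theorem~\ref{thm:index-hypergraph-product}, sort the feasible vector, find an edge inside the $\alpha+1$ smallest coordinates to get $a_{\alpha+1}\ge 1$, and sum. The only cosmetic difference is that you bound by $\sum_{i\in E}a_i$ directly, whereas the paper bounds by the $q$ largest coordinates in $\{1,\dots,\alpha+1\}$.
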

\begin{proof}
It suffices to prove $\tau(\mathcal H)\ge r-\alpha(\mathcal H)+q-1$. Let $(a_1,\dots,a_r)$ be any feasible vector in the definition of $\tau(\mathcal H)$, and reorder the coordinates so that $a_1\le\cdots\le a_r$.  Put $\alpha:=\alpha(\mathcal H)$.  Since every set of $\alpha+1$ vertices contains an edge of $\mathcal H$, the set $\{1,\dots,\alpha+1\}$ contains some edge $E$.  As $\mathcal H$ is $q$-uniform, $|E|=q$, and hence
$    \sum_{j=\alpha-q+2}^{\alpha+1}a_j\ge \sum_{i\in E}a_i\ge q.$
In particular, $a_{\alpha+1}\ge1$, and therefore
\[
    \sum_{i=1}^r a_i
    \ge \sum_{j=\alpha-q+2}^{\alpha+1}a_j+\sum_{j=\alpha+2}^{r}a_j
    \ge q+(r-\alpha-1)=r-\alpha+q-1.
\]
Taking the minimum over all feasible vectors gives $\tau(\mathcal H)\ge r-\alpha(\mathcal H)+q-1$.  The result now follows from Theorem~\ref{thm:index-hypergraph-product}.
\end{proof}

A simple special case of Corollary~\ref{cor:index-hypergraph-alpha} resembles the classical
\((p,q)\)-property in discrete geometry: among any $p$ sets in a family, some $q$ of them have a common point. The Hadwiger--Debrunner problem and the Alon--Kleitman $(p,q)$-theorem study the global piercing consequences of this local intersection assumption~\cite{AK1992,HD1957}. Suppose that among every \(k\) of the families there are \(q\) which are
\(q\)-cross-intersecting.  Define the \(q\)-uniform index hypergraph
$\mathcal H:=
    \left\{
        Q\in\binom{[r]}q:
        \{\F_i:i\in Q\}\text{ is }q\text{-cross-intersecting}
    \right\}.$
Then every \(k\)-subset of \([r]\) contains an edge of \(\mathcal H\), so
    $\alpha(\mathcal H)\le k-1.$
Hence Corollary~\ref{cor:index-hypergraph-alpha} gives, for
\(p\le (q-1)/q\),
\[
    \prod_{i=1}^r\mu_p(\F_i)
    \le
    p^{r-\alpha(\mathcal H)+q-1}
    \le
    p^{r-k+q}.
\]
This exponent is tight: take \(k-q\) families to be \(2^{[n]}\) and the remaining
\(r-k+q\) families to be the same \(1\)-star.

\subsection{Further questions}

We end with several directions suggested by the proof.

\begin{itemize}
    \item \emph{Unequal biases.}
    The unequal-bias case of the Frankl--Tokushige conjecture remains open.
    It would be interesting to find a genuinely multi-bias analogue of the critical-bias
    coupling or of the additive estimate in Lemma~\ref{lem:critical-sum}.

    \item \emph{One-sided stability.}
    Theorem~\ref{thm:r-cross-product-stability} gives a symmetric-difference bound
    $\mu_p(\F_i\Delta \mathcal S_j)\le C_r\varepsilon p.$
    The deletion example shows that a linear bound is necessary for symmetric difference.
    However, it may still be possible to prove a stronger one-sided estimate for the mass
    outside the approximating star, $\mu_p(\F_i\setminus\mathcal S_j),$
    separating ``added'' sets outside the star from ``deleted'' sets inside the star.

    \item \emph{\(t\)-wise common intersection.}
    What is the correct product theorem for families satisfying
     $   |A_1\cap\cdots\cap A_r|\ge t$
    for every \(A_i\in\F_i\)?  The present coordinatewise coupling forces the common
    intersection to be empty and is therefore naturally adapted to the case \(t=1\).  The
    case \(t>1\) likely requires a different coupling or a different critical inequality.
\end{itemize}

\noindent\textbf{Acknowledgements.} During an early exploratory stage, the authors used language-model-based tools to help brainstorm candidate proof strategies; any suggestions arising from these tools served only as informal inspiration.

\bibliographystyle{abbrv}
\bibliography{reference}

\end{document}